\newtheorem{theorem}{Theorem}
\newtheorem*{definition}{Definition}
\newtheorem{proposition}{Proposition}
\newtheorem{lemma}{Lemma}
\newtheorem{corollary}{Corollary}
\numberwithin{equation}{section}
\theoremstyle{theorem}
\newtheorem{remarks}{\textbf{Remarks}}[section]{\vskip 0.5cm}
\newtheorem{remark}{\textbf{Remark}}{\vskip 0.5cm} 
\newtheorem*{acknowledgements}{\textbf{Acknowledgements}}{\vskip 0.5cm}
\title{$L^1$ averaging lemma for transport equations with Lipschitz force fields}
\author{Daniel Han-Kwan\footnote{\'Ecole Normale Sup\'erieure, D\'epartement de Math\'ematiques et Applications,  45 rue d'Ulm 75230 Paris Cedex 05 France, email : {daniel.han-kwan@ens.fr}}}
 \date{}
\begin{document}
 \maketitle

\begin{abstract}
The purpose of this note is to extend the $L^1$ averaging lemma of Golse and Saint-Raymond \cite{GolSR} to the case of a kinetic transport equation with a force field $F(x)\in W^{1,\infty}$. To this end, we will prove a local in time mixing property  for the transport equation $\partial_t f + v.\nabla_x f + F.\nabla_v f =0$.
\end{abstract}

\section*{Introduction}
Let $d \in \mathbb{N}^*$ and $1<p<+\infty$. We consider $\mathbb{R}^d$ equipped with the Lebesgue measure.  Let $f(x,v)$ and $g(x,v)$ be  two measurable functions in $L^p(\mathbb{R}^d \times \mathbb{R}^d)$  satisfying the transport equation:
\begin{equation}
v.\nabla_x f = g.
\end{equation}

 Although transport equations are of hyperbolic nature (and thus there is a priori no regularizing effect), it was first observed for  by Golse, Perthame and Sentis in \cite{GPS} and then by Golse, Lions, Perthame and Sentis \cite{GLPS} (see also Agoshkov \cite{Ago} for related results obtained independently) that the velocity average (or moment) $\rho(x)= \int f \Psi(v) dv$ with $\Psi \in \mathcal{C}^\infty_c(\mathbb{R}^d)$ is smoother than $f$ and $g$ : more specifically it belongs to some Sobolev space $W^{s,p}(\mathbb{R}^d)$ with $s>0$.
These kinds of results are referred to as "velocity averaging lemma". The analogous results in the time-dependent setting also hold, that is for the equation:
\begin{equation}
\label{time}
\partial_t f + v.\nabla_x f = g.
\end{equation}

Refined results with various generalizations (like derivatives in the right-hand side, functions with different integrability in $x$ and $v$...) were obtained in \cite{DPLM}, \cite{BZ}, \cite{PS}, \cite{JV}. There exist many other interesting contributions. We refer to Jabin \cite{J} which is a rather complete review on the topic.

Velocity averaging lemmas are tools of tremendous importance in kinetic theory since they provide some strong compactness  which is very often necessary to study non-linear terms (for instance when one considers an approximation scheme to build weak solutions, or for the study of asymptotic regimes). There are numerous applications of these lemmas; two emblematic results are the existence of renormalized solutions to the Boltzmann equation \cite{DPLbol} and the existence of global weak solutions to the Vlasov-Maxwell system \cite{DPLvm}. Both are due to DiPerna and Lions.

The limit case $p=1$ is actually of great interest. In general, for a sequence $(f_n)$ uniformly bounded in $L^1(dx\otimes dv)$ with $v.\nabla_x f_n$ also uniformly bounded in $L^1 (dx\otimes dv) $, the sequence of velocity averages $\rho_n=\int f_n \Psi(v)dv$  is not relatively compact in $L^1(dx)$ (we refer to \cite{GLPS} for an explicit counter-example). This lack of compactness is due to the weak compactness pathologies of $L^1$. Indeed, as soon as we add some weak compactness to the sequence (or equivalently some equiintegrability in $x$ and $v$ in view of the classical Dunford-Pettis theorem), then we recover some strong compactness in $L^1$ for the moments (see Proposition 3 of \cite{GLPS} or Proposition \ref{equiXV} below).  
  
  We recall precisely the notion of equiintegrability which is central in this paper.

  \begin{definition}\label{equi} 
  
  \begin{enumerate}
 \item (Local equiintegrability in $x$ and $v$)
 
 Let  $(f_\epsilon)$ be a bounded family of
 $L^1_{loc}(dx\otimes dv)$. It is said locally equiintegrable in $x$ and
$v$ if and only if for any $\eta>0$ and for any compact subset
$K\subset \mathbb{R}^{d}\times  \mathbb{R}^{d}$, there exists $\alpha>0$  
such that for any measurable set  $A\subset  \mathbb{R}^{d}\times  \mathbb{R}^{d}$ with  $\vert A\vert < \alpha$, 
we have for any $\epsilon$ :
\begin{equation}
\int_A \mathbbm{1}_K(x,v)\vert f_\epsilon(x,v)\vert dv  
dx \leq \eta.
\end{equation}

  \item (Local equiintegrability in $v$)
  
Let  $(f_\epsilon)$ be a bounded family of
 $L^1_{loc}(dx\otimes dv)$. It is said locally equiintegrable in
$v$ if and only if for any $\eta>0$ and for any compact subset
$K\subset \mathbb{R}^{d}\times  \mathbb{R}^{d}$, there exists $\alpha>0$  
such that for each family $(A_x)_{x \in \mathbb{R}^d}$ of measurable sets of  $\mathbb{R}^{d}$  satisfying  $\sup_{x \in \mathbb{R}^d} \vert A_x\vert < \alpha$, 
we have for any $\epsilon$ :
\begin{equation}
\int \left(\int_{A_x} \mathbbm{1}_K(x,v)\vert f_\epsilon(x,v)\vert dv  
\right)dx \leq \eta.
\end{equation}

\end{enumerate}

\end{definition}

We observe that local equiintegrability in $(x,v)$ always implies local equiintegrability in $v$, whereas the converse is false in general.

The major improvement of the paper of Golse and Saint-Raymond \cite{GolSR} is to show that actually, only equiintegrability in $v$ is needed to obtain the $L^1$ compactness for the moments. This observation was one of the key arguments of their outstanding paper \cite{GSR} which establishes the convergence of renormalized solutions to the Boltzmann equation in the sense of DiPerna-Lions to weak solutions to the Navier-Stokes equation in the sense of Leray.

More precisely, the result they prove is Theorem \ref{L1} stated afterwards, with $F=0$ (free transport case). The aim of this paper is to show that the result also holds if one adds some force field $F(x)=(F_i(x))_{1\leq i \leq d} $ with $F \in W^{1,\infty}(\mathbb{R}^d)$: 

  \begin{theorem}\label{L1} Let  $(f_\epsilon)$ be a family bounded 
in  $L^1_{{loc}}(dx\otimes dv)$  locally
equiintegrable in $v$ and such that $v.\nabla_x f_\epsilon+ F. \nabla_v f_\epsilon$ is bounded in $L^1_{{loc}}(dx\otimes
dv)$. Then :
\begin{enumerate}
   \item $(f_\epsilon)$ is locally equiintegrable in both variables $x$
\textbf{and} $v$.
   \item For all $\Psi \in \mathcal{C}^{1}_c(\mathbb{R}^d)$, the family
$\rho_\epsilon(x)=\int f_\epsilon(x,v)\Psi(v)dv$ is relatively compact in $L^1_{{loc}}(dv)$.
\end{enumerate}
\end{theorem}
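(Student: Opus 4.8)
The plan is to deduce everything from a single local-in-time mixing estimate for the characteristic flow of $v\cdot\nabla_x+F\cdot\nabla_v$, and to obtain the second assertion from the first. Indeed, once we know that $(f_\epsilon)$ is locally equiintegrable in $(x,v)$, the family $(f_\epsilon\mathbbm{1}_K)$ is relatively weakly compact in $L^1$ by Dunford--Pettis, and the relative $L^1_{loc}$ compactness of the moments $\rho_\epsilon$ follows from the classical averaging lemma in the equiintegrable regime (Proposition \ref{equiXV}, extended to the force field by the same Duhamel argument as below); the presence of the lower-order field $F\in W^{1,\infty}$ does not affect this step. So the whole point is the first assertion.

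Since $F\in W^{1,\infty}$, the vector field $(v,F(x))$ on $\mathbb{R}^d\times\mathbb{R}^d$ is globally Lipschitz, hence by Cauchy--Lipschitz it generates a flow $Z_s(x,v)=(X_s(x,v),V_s(x,v))$ solving $\dot X_s=V_s$, $\dot V_s=F(X_s)$; moreover $\operatorname{div}_{x,v}(v,F(x))=0$, so $Z_s$ preserves the Lebesgue measure, and on any interval $s\in[-T,T]$ it maps compacts into a fixed compact set and is bi-Lipschitz with constants depending only on $T$ and $\|F\|_{W^{1,\infty}}$. Writing $g_\epsilon:=v\cdot\nabla_x f_\epsilon+F\cdot\nabla_v f_\epsilon$ and differentiating along characteristics, $\tfrac{d}{ds}f_\epsilon(Z_s)=g_\epsilon(Z_s)$, so averaging over $s\in[0,t]$ gives the representation
\[ f_\epsilon(x,v)=\frac1t\int_0^t f_\epsilon(Z_s(x,v))\,ds-\frac1t\int_0^t\int_0^s g_\epsilon(Z_\tau(x,v))\,d\tau\,ds, \]
valid after the usual mollification. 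I would test the first assertion on an arbitrary measurable set $A$ with $|A|$ small. The contribution of the double time integral of $g_\epsilon$ is harmless: bounding $\mathbbm{1}_A\mathbbm{1}_K$ by $\mathbbm{1}_{\tilde K}$, using Fubini and the measure-preserving change of variables $Z_\tau$, it is at most $Ct\sup_\epsilon\|g_\epsilon\|_{L^1(\tilde K)}$, i.e. $O(t)$ uniformly in $\epsilon$ and independent of $|A|$.

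The main term $\frac1t\int_A\mathbbm{1}_K\int_0^t|f_\epsilon(Z_s)|\,ds$ is where the $v$-equiintegrability must be used. Changing variables by the measure-preserving flow, it equals $\int \Phi_A\,\mathbbm{1}_{\tilde K}\,|f_\epsilon|$, where
\[ \Phi_A(y,w)=\frac1t\big|\{s\in[0,t]:Z_{-s}(y,w)\in A\}\big|\in[0,1],\qquad \int \Phi_A=|A|. \]
The mixing property of the flow --- the local-in-time mixing estimate announced in the abstract --- is precisely the statement that, although $A$ is an \emph{arbitrary} small set in $(x,v)$, its time-averaged preimage $\Phi_A$ is thin in $v$ in the averaged sense required by Definition \ref{equi}: after the layer-cake decomposition $\int\Phi_A\,\mathbbm{1}_{\tilde K}|f_\epsilon|=\int_0^1\big(\int_{\{\Phi_A>\lambda\}}\mathbbm{1}_{\tilde K}|f_\epsilon|\big)\,d\lambda$, one shows that for fixed $t$ the super-level sets $\{\Phi_A>\lambda\}$ have $v$-slices of measure tending to $0$ with $|A|$. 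Feeding this into the $v$-equiintegrability hypothesis controls the main term by a quantity small with $|A|$, uniformly in $\epsilon$. The mechanism is that fast velocities leave any thin region quickly and so cannot sustain a large time-average, which is exactly what converts control of $v$-slices into control of full $(x,v)$ sets.

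The hard part is this mixing estimate for a genuine force field. For $F=0$ the flow is explicit, $Z_{-s}(y,w)=(y-sw,w)$, and the change of variables $w\mapsto y-sw$ has Jacobian $s^{-d}$, from which the thinness of $\{\Phi_A>\lambda\}$ follows by an elementary computation; the singularity of this Jacobian at $s=0$ is the reason the estimate is only local in time, and the resonant contribution of small $s$ must be peeled off separately using the trivial bound $\Phi_A\le \delta_0/t$ on $s\in[0,\delta_0]$. For $F\in W^{1,\infty}$ the same dispersion holds at leading order, $X_{-s}(y,w)=y-sw+O(s^2)$ and $\partial_w X_{-s}=-s\,\mathrm{Id}+o(s)$, so $w\mapsto X_{-s}(y,w)$ is still a non-degenerate change of variables with Jacobian comparable to $s^d$ for $s$ bounded away from $0$; the technical obstacle is that $F$ is only Lipschitz, so the flow is not twice differentiable in space and these Jacobian bounds must be obtained from bi-Lipschitz estimates (or by regularizing $F$ and passing to the limit) rather than from explicit formulas. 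Granting the mixing estimate, the first assertion is completed by first choosing $t$ small to make the remainder $\le\eta/2$ and then $\alpha$ small to make the main term $\le\eta/2$.
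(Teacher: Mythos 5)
There is a genuine gap, and it sits exactly at the step you label ``the hard part'': the mixing property you attribute to $\Phi_A$ is false in dimension $d\ge 2$, already for free transport. Take $t=1$, fix $y_0\in\mathbb{R}^d$ and a ball $B\subset\mathbb{R}^d_v$ away from the origin, and set
\[
A_\delta=\bigl\{(x,w):\ w\in B,\ \operatorname{dist}\bigl(x,\{y_0-sw:\ 0\le s\le 1\}\bigr)<\delta\bigr\},
\]
a union of $\delta$-tubes around backward characteristics. Then $|A_\delta|=O(\delta^{d-1})\to 0$ as $\delta\to0$, yet $Z_{-s}(y_0,w)=(y_0-sw,w)\in A_\delta$ for every $s\in[0,1]$ and every $w\in B$, so $\Phi_{A_\delta}(y_0,w)=1$ on $B$ (and likewise for all $y$ with $|y-y_0|<\delta/2$): the super-level sets $\{\Phi_{A_\delta}>\lambda\}$ have a $v$-slice containing the \emph{fixed} ball $B$ no matter how small $|A_\delta|$ is, so they cannot be fed into the $v$-equiintegrability hypothesis. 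The structural reason is that the dispersion estimate (Proposition \ref{mixing}) controls $\Vert\Phi(t)\Vert_{L^\infty_x(L^1_v)}$ by $\Vert\Phi^0\Vert_{L^1_x(L^\infty_v)}$, and for $\Phi^0=\mathbbm{1}_A$ with $A$ an arbitrary set in $(x,v)$ this norm is the measure of the $x$-\emph{projection} of $A$, not $|A|$: thin tubes around characteristics have small measure but a fat shadow, and mass riding along them sustains a time-average of order one. (In $d=1$ your slice claim happens to be true, since a trajectory spending time $\lambda t$ in $A$ forces $|A_w|\ge\lambda t\,|w|$ and a Chebyshev argument in $|w|$ concludes; this may be the source of the intuition, but it does not survive in higher dimension.) Your one-step scheme --- test on arbitrary small $(x,v)$-sets and absorb them directly by $v$-equiintegrability --- therefore cannot work; any repair needs smallness in $x$ for the exceptional positions, which is precisely what is being proved, so the argument would be circular.

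The paper circumvents this with a two-step structure that is absent from your proposal. First (Lemma \ref{lem1}) it tests only against \emph{cylindrical} data $\Phi^0(x,v)=\mathbbm{1}_A(x)$ with $A$ small in $x$ alone; then $\Vert\Phi^0\Vert_{L^1_x(L^\infty_v)}=|A|$ exactly, Proposition \ref{mixing} gives $\sup_x|A(t)_x|\le 2|t|^{-d}|A|$, and the $v$-equiintegrability yields local equiintegrability in $x$ of the averages $\rho_\epsilon=\int|f_\epsilon|\Psi\,dv$ (your time-averaged Duhamel representation and the $O(t)$ remainder bound are a legitimate variant of the paper's Green formula, Lemma \ref{parts}, and that part is fine). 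Second (Lemma \ref{lem2}), an arbitrary small set $B\subset\mathbb{R}^d\times\mathbb{R}^d$ is handled by a Chebyshev slicing: on $E=\{x:\ |B_x|\le|B|^{1/2}\}$ one uses equiintegrability in $v$, while $|E^c|\le|B|^{1/2}$, so on $E^c$ one uses the equiintegrability in $x$ of the moments just established. Your remaining ingredients --- deducing point 2 from point 1 via Proposition \ref{equiXV} (with $F\cdot\nabla_v f$ rewritten as a $v$-divergence in the $L^2$ averaging lemma) and the Rademacher--Gronwall Jacobian bounds for the Lipschitz flow --- agree with the paper, but without the cylindrical reduction and the slicing lemma the proof of point 1, which is the heart of the theorem, does not go through.
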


One key ingredient of the proof for $F=0$ is the nice dispersion properties of the free transport operator. We will show in Section \ref{mix} that an analogue also holds for small times when $F\neq 0$:

\begin{proposition}
 Let $ F(x) $ be a Lipschitz vector field. There exists a maximal time $\tau>0$  (depending only on $\Vert \nabla_x F \Vert _{L^{\infty}}$)  such that, if $f$ is the solution to
the transport equation:

\[
 \left\{
    \begin{array}{ll}
      \partial_t f + v.\nabla_x f +F.\nabla_v f=0, \\
      f(0,.,.)=f^0\in L^p(dx \otimes dv),
    \end{array}
  \right.
\]
Then:

\begin{equation}
\label{intromix}
\forall \vert t \vert \leq \tau, \Vert{f(t)}\Vert_{L^\infty_x(L^1_v)} \leq
\frac{2}{|t|^{d}} \Vert{f^0}\Vert_{L^1_x(L^\infty_v)}.
\end{equation}
\end{proposition}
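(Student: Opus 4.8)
The plan is to represent the solution along the characteristic flow and then reduce the dispersion estimate to a lower bound on a Jacobian determinant, exactly as in the free transport case but keeping track of the perturbation coming from $F$. For fixed $(t,x,v)$, let $s \mapsto (X(s),V(s)) = (X(s;t,x,v),V(s;t,x,v))$ solve
\[
\dot X = V, \qquad \dot V = F(X), \qquad X(t)=x,\ V(t)=v.
\]
Since $F$ is Lipschitz this system is globally well posed and generates a flow, and because $f$ is constant along characteristics one has $f(t,x,v) = f^0\big(X(0;t,x,v),V(0;t,x,v)\big)$ (first for smooth data, then for $L^p$ data by density). Fix $x$ and $t \in (0,\tau]$ and set $\Phi(v) := X(0;t,x,v)$ and $W(v) := V(0;t,x,v)$, so that $\int_{\mathbb{R}^d}|f(t,x,v)|\,dv = \int_{\mathbb{R}^d}|f^0(\Phi(v),W(v))|\,dv$. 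The key observation is that the $L^\infty_v$ bound is pointwise in the first variable, $|f^0(\Phi(v),W(v))| \le \|f^0(\Phi(v),\cdot)\|_{L^\infty_v}$, so once $v \mapsto \Phi(v)$ is injective with Jacobian bounded below, the change of variables $y = \Phi(v)$ gives
\[
\int_{\mathbb{R}^d} |f(t,x,v)|\,dv \le \Big(\inf_v |\det D\Phi(v)|\Big)^{-1} \int_{\mathbb{R}^d}\|f^0(y,\cdot)\|_{L^\infty_v}\,dy = \Big(\inf_v |\det D\Phi(v)|\Big)^{-1}\|f^0\|_{L^1_x(L^\infty_v)} .
\]
Everything thus reduces to proving $|\det D\Phi(v)| \ge |t|^d/2$, uniformly in $x$ and $v$, for $|t|$ small.

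Next I would compute $D\Phi = \partial_v X(0)$ via the variational equations. Writing $A(s) = \partial_v X(s)$ and $B(s) = \partial_v V(s)$, differentiation of the characteristic system gives $\dot A = B$, $\dot B = \nabla F(X(s))\,A$, with terminal data $A(t) = 0$, $B(t) = I$; equivalently $A$ solves the linear second order system $\ddot A(s) = \nabla F(X(s))\,A(s)$ with $A(t)=0$, $\dot A(t) = I$. In the free case $\nabla F \equiv 0$ this yields $A(s) = (s-t)I$, hence $A(0) = -tI$ and $|\det A(0)| = |t|^d$, recovering the classical bound with constant $1$. For general $F$ I would write $A(s) = (s-t)I + R(s)$, where the remainder satisfies $R(t) = \dot R(t) = 0$ and $\ddot R = \nabla F(X)\big[(s-t)I + R\big]$.

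The heart of the argument is then a Gronwall estimate for $R$. Setting $M = \|\nabla_x F\|_{L^\infty}$ and integrating twice from $t$, one bounds $\rho := \sup_{s \in [0,t]}\|R(s)\|$ by $\rho \le \tfrac12 Mt^2(t + \rho)$, so that $\rho \le M t^3$ as soon as $Mt^2 \le \tfrac12$. Consequently $A(0) = -t\,(I - N)$ with $\|N\| = \|R(0)\|/|t| \le M t^2 \le M\tau^2$, whence $|\det A(0)| = |t|^d\,|\det(I - N)|$. Since the eigenvalues of $N$ are bounded in modulus by $\|N\|$, one has $|\det(I-N) - 1| \le (1 + M\tau^2)^d - 1$, which is $\le \tfrac12$ once $\tau$ is chosen small enough (depending only on $M$, for the fixed dimension $d$). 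The same bound $\|N\| \le M\tau^2 < 1$ makes $\Phi$ injective, since $|\Phi(v)-\Phi(v')| \ge |t|(1 - Mt^2)|v - v'|$, which justifies the change of variables used above. This yields $|\det D\Phi| \ge |t|^d/2$ uniformly in $x,v$, and taking the supremum over $x$ proves \eqref{intromix} for $t \in (0,\tau]$; the case $t<0$ follows from the time-reversibility of the flow.

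I expect the main obstacle to be precisely this Jacobian lower bound: one must show that the linearized flow in the $v$ variable remains a controlled perturbation of its free value $-tI$ over the whole interval, uniformly in the base point $(x,v)$ — this is where the smallness of $\tau$ in terms of $\|\nabla_x F\|_{L^\infty}$ enters — together with the global injectivity of $\Phi$ needed to turn the pointwise Jacobian bound into a genuine change of variables.
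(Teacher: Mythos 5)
Your proof is correct and follows essentially the same route as the paper: representation along characteristics, a Gronwall-type bound showing $\partial_v X$ is a small perturbation of its free value $-tI$ for small times, a determinant lower bound (your eigenvalue estimate replacing the paper's $\det(Id+A)\geq 1-d!\Vert A\Vert_\infty$ lemma), global injectivity of $v\mapsto X(0;t,x,v)$ from the same perturbation bound, and the change of variables yielding the factor $2|t|^{-d}$. The only differences are cosmetic: you work with the second-order variational equation and an algebraic bootstrap where the paper uses the integral (Taylor) form of the flow and Gronwall's lemma directly.
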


Let us also mention that the main theorem generalizes to the time-dependent setting, for transport equations of the form (\ref{time}). The usual trick to deduce such a result from the stationary case is to enlarge the phase space. Indeed we can consider $x'= (t,x)$ in $\mathbb{R}^{d+1}$ endowed with the Lebesgue measure, and $v'=(t,v)$ in $\mathbb{R}^{d+1}$ endowed with the measure $\mu=\delta_{t=1} \otimes \text{Leb}$ (where $\delta$ is the dirac measure). Then such a measure $\mu$ satisfies property (2.1) of \cite{GLPS}. As a consequence, all the results of Section \ref{first} will still hold.

Nevertheless, we observe that our key local in time mixing estimate (\ref{intromix}) seems to not hold when $\mathbb{R}^{d+1}$ is equipped with the new measure $\mu$ (the main problem being that the only speed associated to the first component of $v'$ is $1$). For this reason, we can not prove that equiintegrability in $v$ implies equiintegrability in $t$.  One result (among other possible variants) is the following:

  \begin{theorem} 
  \label{L1time}
  Let $F (t,x)\in \mathcal{C}^0(\mathbb{R}^+, W^{1,\infty}(\mathbb{R}^d))$. Let  $(f_\epsilon)$ be a family bounded 
in  $L^1_{{loc}}(dt\otimes dx\otimes dv)$  locally
equiintegrable in $v$ and such that $\partial_t f_\epsilon + v.\nabla_x f_\epsilon+ F. \nabla_v f_\epsilon$ is bounded in $L^1_{{loc}}(dt\otimes dx\otimes
dv)$. Then :
\begin{enumerate}
   \item $(f_\epsilon)$ is locally equiintegrable in the variables $x$ and $v$ (but not necessarily with respect to $t$).
   \item For all $\Psi \in \mathcal{C}^{1}_c(\mathbb{R}^d)$, the family
$\rho_\epsilon(t,x)=\int f_\epsilon(t,x,v)\Psi(v)dv$ is relatively compact  with respect to the $x$ variable in $L^1_{{loc}}(dt\otimes dx)$, that is, for any compact $K \subset \mathbb{R}^+_t \times \mathbb{R}^d_x$:

\begin{equation}
\lim_{\delta \rightarrow 0} \sup_{\epsilon} \sup_{\vert x' \vert \leq \delta} \Vert (\mathbbm{1}_{ K}\rho_\epsilon)(t,x+x')- (\mathbbm{1}_{ K}\rho_\epsilon)(t,x)\Vert_{L^1(dt\otimes dx)} =0.
\end{equation}

\end{enumerate}

\end{theorem}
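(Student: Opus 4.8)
The plan is to reduce Theorem~\ref{L1time} to the stationary machinery via the phase-space enlargement announced above. I would set $x'=(t,x)\in\mathbb{R}^{d+1}$ with Lebesgue measure and $v'=(s,v)$ with the $s$-variable frozen at $1$ by $\mu=\delta_{s=1}\otimes\text{Leb}$, and introduce the enlarged field $F'(x')=(0,F(t,x))$. Under $\mu$ one has $v'.\nabla_{x'}+F'.\nabla_{v'}=\partial_t+v.\nabla_x+F.\nabla_v$, so the hypotheses on $(f_\epsilon)$ translate into: bounded in $L^1_{loc}(dx'\otimes d\mu)$, locally equiintegrable in $v'$ (which is the same as in $v$, since the $s$-marginal of $\mu$ is a Dirac mass), and $v'.\nabla_{x'}f_\epsilon+F'.\nabla_{v'}f_\epsilon$ bounded in $L^1_{loc}$. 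Because $\mu$ satisfies property (2.1) of \cite{GLPS}, every measure-theoretic statement of Section~\ref{first}, in particular Proposition~\ref{equiXV}, applies verbatim in this enlarged setting.

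The one ingredient that does not transfer is the dispersion estimate (\ref{intromix}): the $s$-component of $v'$ is frozen at $1$ and therefore produces no spreading in the $t$-direction. What survives, and all I would actually use, is dispersion in the genuine space variable $x$. Concretely, I would prove a non-autonomous analogue of the Proposition, namely that there is a $\tau>0$ depending only on $\sup_{s}\Vert\nabla_x F(s,\cdot)\Vert_{L^\infty}$ over the time interval under consideration, such that the solution operator from $t_0$ to $t_0+h$ of $\partial_t f+v.\nabla_x f+F.\nabla_v f=0$ obeys $\Vert f(t_0+h)\Vert_{L^\infty_x(L^1_v)}\leq\frac{2}{|h|^{d}}\Vert f(t_0)\Vert_{L^1_x(L^\infty_v)}$ for $|h|\leq\tau$. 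Its proof is identical to that of Section~\ref{mix}: the assumption $F\in\mathcal{C}^0(\mathbb{R}^+,W^{1,\infty})$ yields a finite Lipschitz-in-$x$ bound, uniform in $s$ on compact time intervals, which is exactly what the Jacobian estimate for the characteristic flow over $[t_0,t_0+h]$ needs. No regularity of $F$ in $t$ is required, which incidentally removes the apparent difficulty that $F'$ fails to be Lipschitz in the full variable $x'$.

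Armed with this $x$-only dispersion, I would run the Golse--Saint-Raymond argument underlying Theorem~\ref{L1} with $t$ carried along as an inert parameter. The mechanism that upgrades equiintegrability in $v$ into equiintegrability in the spatial variable is driven solely by the spreading of characteristics in $x$, which is available and uniform in $t$ on compact sets; this yields local equiintegrability in $(x,v)$ and proves assertion~(1). Since there is no analogous spreading in $t$, the argument produces nothing in that variable, consistent with the stated failure of equiintegrability in $t$. Assertion~(2) then follows from~(1) through Proposition~\ref{equiXV}: local equiintegrability in $(x,v)$ forces equicontinuity of the moments under translations in the spatial variable $x$, uniformly in $\epsilon$ and with $t$ integrated out, which is precisely the displayed limit and, by Fr\'echet--Kolmogorov, the asserted relative compactness of $(\rho_\epsilon)$ with respect to $x$ in $L^1_{loc}(dt\otimes dx)$. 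The main obstacle, and the only genuinely new point beyond the stationary theorem, is the mixing step: isolating the $x$-dispersion from the frozen $t$-direction and checking that the rest of the argument is insensitive to the absence of $t$-spreading; everything else is a transcription of Sections~\ref{first} and~\ref{mix} to the enlarged phase space.
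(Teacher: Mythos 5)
Your overall route is exactly the one the paper has in mind for this statement (the paper only sketches it in the introduction): enlarge the phase space with $x'=(t,x)$, $\mu=\delta_{s=1}\otimes\mathrm{Leb}$, note that the measure satisfies property (2.1) of \cite{GLPS} so Section \ref{first} transfers, observe that dispersion survives only in the genuine $x$-direction, and prove a per-time-slice, non-autonomous version of Proposition \ref{mixing} with $\tau$ controlled by $\sup_s\Vert\nabla_xF(s,\cdot)\Vert_{L^\infty}$ on compact time intervals (continuity of $F$ in $t$ is indeed enough, since the Gronwall argument only uses $\nabla_xF$). Running the Section 4 argument with $t$ as an inert parameter, testing against indicator functions $\mathbbm{1}_{A_t}(x)$ of measurable families with $\sup_t|A_t|$ small, does give assertion (1), and correctly explains why nothing is produced in $t$. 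Up to here your proposal matches the paper's intent and is sound.

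The gap is in your last step: assertion (2) does \emph{not} follow from (1) ``through Proposition \ref{equiXV}.'' The proof of Proposition \ref{equiXV} truncates at height $\alpha$ and needs $\Vert g\,\mathbbm{1}_{\{|g|>\alpha\}}\Vert_{L^1}$ to be uniformly small, i.e.\ genuine Dunford--Pettis equiintegrability jointly in \emph{all} variables, here $(t,x,v)$. Conclusion (1) only gives equiintegrability in $(x,v)$ uniformly over $t$-slices, and the superlevel set $\{|f_\epsilon|>\alpha\}$ need not have small $t$-sections: the family $f_\epsilon=\epsilon^{-1}\mathbbm{1}_{[0,\epsilon]}(t)\,h(x,v)$ is slice-wise equiintegrable in $(x,v)$, yet its truncated part retains mass $\Vert h\Vert_{L^1}$ for every fixed $\alpha$, so the $L^1$--$L^2$ interpolation at the heart of Proposition \ref{equiXV} collapses. (Note that in this example the $x$-translation estimate of (2) is still true, which shows the conclusion holds for reasons the cited mechanism does not capture.) Your appeal to Fr\'echet--Kolmogorov is also off target: without $t$-translation equicontinuity there is no actual relative compactness in $L^1_{loc}(dt\otimes dx)$; conclusion (2) is, by the theorem's own definition, only the displayed $x$-translation limit, and that requires a dedicated argument handling possible concentration in $t$ --- for instance, first applying $\lambda R_\lambda$, which averages along characteristics in time and upgrades the $L^1_{t,loc}$ slice masses to an $L^\infty_{t,loc}(L^1_{x,v})$ bound for $\lambda R_\lambda f_\epsilon$, after which the $L^\infty_t(L^1_{x,v})$ variant mentioned in the paper (cf.\ Lemma 3.6 of \cite{GSR}) applies to the main term and the remainder $R_\lambda g_\epsilon$ is small in $L^1$. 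As written, your proof of (2) has a genuine hole at precisely the point where the time variable, which you correctly identified as the obstruction, re-enters.
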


Another possibility is to assume that  $f_\epsilon$ is bounded in $L^\infty_{{t,loc}}(L^1_{x,v,loc})$, in which case we will get equiintegrability in $t,x$ and $v$ and thus compactness for $\rho_\epsilon$ in $t$ and $x$. We refer to \cite{GSR}, Lemma 3.6, for such a statement in the free transport case.

\begin{remarks}
\label{rk}
\begin{enumerate}
\item Since the result of Theorem \ref{L1} is essentially of local nature, we could slightly weaken the assumption on $F$:

For any $R>0$,
\begin{equation}
 \exists M(R),\forall \vert x_1\vert ,\vert x_2\vert \leq R, \quad \vert F(x_1) - F(x_2) \vert  \leq M(R)   \vert x_1-x_2 \vert.
\end{equation}


In other words we can deal with $ F \in W^{1,\infty}_{loc}$.

\item With the same proof, we can treat the case of force fields $F(x,v) \in W^{1,\infty}_{x,v,loc}$ with zero divergence in $v$ : 

\[
\operatorname{div}_v F=0.
\]
 
Typically we may think of the Lorentz force $v\wedge B$ where $B$ is a smooth magnetic field.

\item We can handle a family of force fields $(F_\epsilon)$ depending on $\epsilon$ as soon as $(F_\epsilon)$ is uniformly bounded in $ W^{1,\infty}(\mathbb{R}^d)$.

\end{enumerate}
\end{remarks}

The following of the paper is devoted to the proof of Theorem \ref{L1}. In Section \ref{first}, we prove that a family satisfying the assumptions of Theorem \ref{L1} and in addition locally equiintegrable in $x$ and $v$, has moments which are relatively strongly compact in $L^1$. In Section \ref{mix}, we investigate the local in time mixing properties of the transport equation $\partial_t f + v.\nabla_x f + F.\nabla_v f =0$. Finally in the last section, thanks to the mixing properties we establish, we show by an interpolation argument that equiintegrability in $v$ provides some equiintegrability in $x$.

\section{A first step towards $L^1$ compactness}
\label{first}

The first step is to show that under the assumptions of Theorem \ref{L1}, point 1 implies point 2. Using classical averaging lemma in $L^2$ (\cite{DPLvm}, \cite{DPLM}), we first prove the following $L^2$ averaging lemma.


\begin{lemma} 
\label{lemL2}
Let $f,g \in L^2(dx \otimes dv)$ satisfy the transport equation:
\begin{equation}
  v.\nabla_x f + F. \nabla_v f = g.
\end{equation}

Then for all $\Psi \in \mathcal{C}^{1}_c(\mathbb{R}^d)$, $\rho(x)=\int
f(x,v)\Psi(v)dv \in H^{1/4}_x$. Moreover,
\begin{equation}
\Vert \rho \Vert_{H^{1/4}_x} \leq C\left(
\left\Vert F \right\Vert_{L^{\infty}_x} 
\Vert f \Vert_{L^2_{x,v}} +
\Vert g \Vert_{L^2_{x,v}}\right).\end{equation} ($C$ is a constant depending only on $\Psi$.)
\end{lemma}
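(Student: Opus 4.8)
The plan is to absorb the force term into the right-hand side and then invoke the classical $L^2$ averaging lemma with velocity derivatives, exactly as in the treatment of $E.\nabla_v f$ for the Vlasov-Maxwell system in \cite{DPLvm}. Since $F$ depends only on $x$, it commutes with $\nabla_v$ and $\operatorname{div}_v\!\big(F(x)f\big) = F.\nabla_v f$; hence the equation can be rewritten as
\[
v.\nabla_x f = g - \operatorname{div}_v\!\big(F(x)\,f\big),
\]
where the new right-hand side is the sum of a zeroth-order term $g \in L^2$ and a single velocity divergence of $Ff \in L^2$, with $\Vert Ff\Vert_{L^2_{x,v}} \le \Vert F\Vert_{L^\infty_x}\Vert f\Vert_{L^2_{x,v}}$.

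Now I would apply the classical $L^2$ averaging estimates (\cite{DPLM}, \cite{DPLvm}). A zeroth-order right-hand side $g$ produces a moment in $H^{1/2}_x$, whereas a right-hand side of the form $\operatorname{div}_v G$ with $G \in L^2$ produces only $H^{1/4}_x$ — the exponent $\tfrac{1}{2(1+m)}$ with $m=1$ velocity derivative. Here $\Psi \in \mathcal{C}^1_c$ is precisely the regularity needed, since integrating by parts transfers the single $v$-derivative onto $\Psi$. Adding the two contributions, the weaker $H^{1/4}$ regularity dominates, the force term contributing $\Vert F\Vert_{L^\infty}\Vert f\Vert_{L^2}$ and the source contributing $\Vert g\Vert_{L^2}$, which is the claimed estimate (a careful accounting also leaves a harmless $\Vert f\Vert_{L^2}$ coming from the resonant zone, automatically present already in the free case $F=0$).

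If one prefers a self-contained argument, take the Fourier transform in $x$, so that $i(v.\xi)\,\hat f = \hat g - \operatorname{div}_v\widehat{Ff}$, and estimate $\hat\rho(\xi) = \int \hat f(\xi,v)\Psi(v)\,dv$ by splitting $\{v : |v.\hat\xi| \le \lambda\}$ (resonant) from its complement, with threshold $\lambda \sim |\xi|^{-1/2}$. On the resonant slab, whose measure against $\operatorname{supp}\Psi$ is $O(\lambda)$, Cauchy--Schwarz gives a contribution $\lesssim \lambda^{1/2}\Vert \hat f\Vert_{L^2_v}$; on the complement one divides by $i(v.\xi)$ and integrates by parts in $v$ against $\Psi/(v.\xi)$. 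The hard part is this integration by parts: differentiating $1/(v.\xi)$ yields the more singular weight $\xi/(v.\xi)^2$, and a crude pointwise bound would only give $H^{1/5}$. The exponent $1/4$ is recovered by keeping the $L^2$ structure, namely Cauchy--Schwarz against $\Vert \widehat{Ff}\Vert_{L^2_v}$ together with the exact computation $\int_{|v.\hat\xi|\ge\lambda,\, v\in\operatorname{supp}\Psi} (v.\xi)^{-4}\,dv \sim |\xi|^{-4}\lambda^{-3}$, so that this piece is $\lesssim |\xi|^{-1}\lambda^{-3/2}\Vert \widehat{Ff}\Vert_{L^2_v}$; with $\lambda \sim |\xi|^{-1/2}$ both pieces decay like $|\xi|^{-1/4}$. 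One must also use a smooth cutoff rather than the sharp indicator to annihilate the boundary terms in the integration by parts, and treat the low frequencies $|\xi|\lesssim 1$ trivially. Integrating in $\xi$ and using Plancherel, together with $\Vert \widehat{Ff}\Vert_{L^2_{\xi,v}} = \Vert Ff\Vert_{L^2_{x,v}} \le \Vert F\Vert_{L^\infty}\Vert f\Vert_{L^2}$, then yields the stated bound.
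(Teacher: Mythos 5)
Your first paragraph is exactly the paper's proof: since $F=F(x)$ satisfies $\operatorname{div}_v F=0$, one rewrites the equation as $v.\nabla_x f = g - \operatorname{div}_v\big(F f\big)$ and invokes the classical $L^2$ averaging lemma with one velocity derivative (Theorem 3 of \cite{DPLvm}), which yields the exponent $1/4$. Your self-contained Fourier sketch --- with the correct threshold $\lambda \sim |\xi|^{-1/2}$, the correct observation that a crude pointwise bound only gives $H^{1/5}$, and the extra $\Vert f \Vert_{L^2}$ term from the resonant slab (which the paper's stated inequality silently omits but which is harmless in its later use in Corollary \ref{corL2}) --- is just a correct unwinding of that cited lemma, not a genuinely different route.
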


\begin{proof}
The standard idea is to consider $-F.
\nabla_v f + g$ as a source. Then, since $\operatorname{div}_v F(x)=0$, we have :
\begin{eqnarray*}
-F. \nabla_v f + g&=& -\sum_{i=1}^d
\frac{\partial}{\partial_{v_i}}\left(F_i f 
\right)+g.
\end{eqnarray*}
We conclude by applying the $L^2$ averaging lemma of \cite{DPLvm}, Theorem 3.
\end{proof}

We recall now in Proposition \ref{lemV} an elementary and classical representation result, obtained by the method of characteristics. 

Let
$b=(v,F)$, $Z=(X,V)$. Since $F \in W^{1,\infty}$, $b$ satisfies the hypotheses of the global Cauchy-Lipschitz theorem. We therefore consider the trajectories 
defined by:

\begin{equation}
\label{carac} \left\{
    \begin{array}{ll}
      Z'(t;x_0,v_0)=b(Z(t;x_0,v_0)) \\
      Z(0;x_0,v_0)=(x_0,v_0).
    \end{array}
  \right.
\end{equation}

For all time, the application $(x_0,v_0) \mapsto
Z(t;x_0,v_0)=(X(t;x_0,v_0),V(t;x_0,v_0))$ is well-defined and is a $C^1$ diffeomorphism. Moreover, since $b$ does not depend explicitly on time, it is also classical that $Z(t)$ is a group. The inverse is thus given by $(x,v) \mapsto Z(-t;x,v)$.

\begin{remark}Since
$\mbox{div}(b)=0$,  Liouville's theorem shows that the volumes in the phase space are preserved (the jacobian determinant of  $Z$ is equal to $1$).
\end{remark}

\begin{proposition}
\label{lemV}
\begin{enumerate}
   \item The time-dependent Cauchy problem :
    \begin{equation}
\label{transport1} \left\{
    \begin{array}{ll}
      \partial_t f + v.\nabla_x f +F.\nabla_v f=0, \\
      f(0,.,.)=f^0\in L^p(dx \otimes dv)
    \end{array}
  \right.
\end{equation}
has a unique solution (in the distributional sense) represented by

$$f(t,x,v)=f^0(X(-t;x,v)),V(-t;x,v))\in L^p(dx\otimes dv).$$

\item For any $ \lambda > 0$, the transport equation

\begin{equation}
\label{transport2} \lambda f(x,v) + v.\nabla_x f +F.\nabla_v f=g \in L^p(dx\otimes dv)
\end{equation}
has a unique solution (in the distributional sense) represented by:
$$R_\lambda :g(x,v) \mapsto f(x,v)=\int_0^{+\infty}e^{-\lambda  
s}g(X(-s;x,v),V(-s;x,v))ds
\in L^p(dx\otimes dv).$$
In addition, $R_\lambda$ is a linear continuous map on $L^p$
with a norm equal to $\frac{1}{\lambda}$.
\end{enumerate}
\end{proposition}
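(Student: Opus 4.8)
The plan is to build everything on the characteristic flow $Z(t)$, using that it is a measure-preserving group (Jacobian $1$). Writing $z=(x,v)$, for the existence part of (1) I would set $f(t,x,v):=f^0(Z(-t;z))$; the volume-preserving change of variables $z\mapsto Z(-t;z)$ gives $\|f(t)\|_{L^p}=\|f^0\|_{L^p}$, so $f(t)\in L^p$ for every $t$. For $f^0\in C^1_c$ the representation is a classical solution: by the group property $f(t,Z(t;z_0))=f^0(z_0)$ is constant along each characteristic, so the chain rule produces the pointwise identity $\partial_t f+v\cdot\nabla_x f+F\cdot\nabla_v f=0$. I would then pass to general $f^0\in L^p$ by density, using the isometry just noted to transfer the weak formulation and to obtain the strong continuity $t\mapsto f(t)\in L^p$.

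The delicate point is uniqueness of the \emph{distributional} $L^p$ solution, since the representation formula by itself does not preclude other weak solutions. Here I would invoke that $b=(v,F)\in W^{1,\infty}$ has bounded (in fact vanishing) divergence, so that the DiPerna--Lions theory of transport equations with Sobolev coefficients applies and yields uniqueness in $L^p$. In keeping with the elementary nature of the statement one can instead argue directly by duality: the difference $w$ of two solutions with the same datum solves the equation with zero initial condition, and pairing $w$ against $\varphi(t,z)=\psi(Z(T-t;z))$ — the backward solution with a smooth final datum $\psi$, which is admissible because the flow is a $C^1$ diffeomorphism mapping compacts to compacts — leaves only the boundary term $\int w(T)\,\psi$, forcing $w(T)=0$ for every $\psi$ and every $T$. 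I expect this uniqueness step to be the main obstacle, as it is the only place where the regularity and divergence structure of $b$ is genuinely used rather than just the flow.

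For (2) I would set $R_\lambda g(x,v)=\int_0^{+\infty}e^{-\lambda s}g(Z(-s;z))\,ds$. The $L^p$ bound and the estimate $\|R_\lambda\|\le 1/\lambda$ follow at once from Minkowski's integral inequality together with the isometry $\|g(Z(-s;\cdot))\|_{L^p}=\|g\|_{L^p}$:
\[
\|R_\lambda g\|_{L^p}\le\int_0^{+\infty}e^{-\lambda s}\,\|g(Z(-s;\cdot))\|_{L^p}\,ds=\frac{1}{\lambda}\|g\|_{L^p}.
\]
That $R_\lambda g$ solves $\lambda f+v\cdot\nabla_x f+F\cdot\nabla_v f=g$ is again checked on smooth $g$ by differentiating $t\mapsto R_\lambda g(Z(t;z))$ along a characteristic: the group property rewrites the integral as $e^{-\lambda t}\int_{-t}^{+\infty}e^{-\lambda u}g(Z(-u;z))\,du$, and differentiating at $t=0$ gives exactly $-\lambda R_\lambda g+g$, i.e.\ the transport operator applied to $R_\lambda g$ equals $g-\lambda R_\lambda g$; density extends this to $L^p$, and uniqueness is inherited from the argument above.

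Finally, to see that the norm is \emph{exactly} $1/\lambda$ and not merely bounded by it, I would test on slowly varying functions. Taking $g_R(x,v)=\chi(x/R^2,\,v/R)$ with $\chi\in C^\infty_c$ equal to $1$ near the origin, the $x/R^2$ scaling compensates the bound $|v|\le 2R$ on the support while $F$ is bounded, so $\|(v\cdot\nabla_x+F\cdot\nabla_v)g_R\|_{L^p}=O(1/R)\,\|g_R\|_{L^p}$. Since the transport operator commutes with $R_\lambda$, the resolvent identity $\lambda R_\lambda g_R=g_R-R_\lambda(v\cdot\nabla_x+F\cdot\nabla_v)g_R$ gives $\|R_\lambda g_R\|_{L^p}\ge\frac{1}{\lambda}\|g_R\|_{L^p}\big(1-O(1/(\lambda R))\big)$, and letting $R\to+\infty$ yields $\|R_\lambda\|\ge 1/\lambda$, hence equality.
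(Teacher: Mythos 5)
Your proof is correct and takes essentially the same route the paper intends: the paper states this proposition without proof as a classical fact "obtained by the method of characteristics," resting exactly on what you use — the measure-preserving flow $Z$ (Liouville) giving the $L^p$ isometry, the representation $f(t)=f^0\circ Z(-t;\cdot)$, and the resolvent as the Laplace transform of the transport group with $\Vert R_\lambda\Vert \le 1/\lambda$ via Minkowski. Your extra details — invoking DiPerna--Lions (or the duality pairing against $\psi(Z(T-t;\cdot))$, which is the right fix since for $F\in W^{1,\infty}$ the flow is a priori only bi-Lipschitz and mollification requires the commutator lemma) for uniqueness of distributional solutions, and the scaled test functions $g_R(x,v)=\chi(x/R^2,v/R)$ showing the norm is exactly $1/\lambda$ — are all sound and fill in precisely what the paper leaves implicit.
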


Using Rellich's compactness theorem, we straightforwardly have the following corollary:
\begin{corollary}
\label{corL2}
The linear continuous map $
T_{\lambda,\Psi}$ :
$$L^2(dx \otimes dv) \rightarrow L^2_{{loc}}(dx)$$
$$ g \mapsto \rho=\int{R_\lambda(g)(.,v) 
\Psi(v)dv}$$ 
is compact for all $\Psi \in
\mathcal{C}^1_{c}(\mathbb{R}^d)$ and all $\lambda >0$.
\end{corollary}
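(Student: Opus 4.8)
The plan is to factor $T_{\lambda,\Psi}$ through the fractional Sobolev space $H^{1/4}_x$ and then invoke the compact embedding $H^{1/4}\hookrightarrow L^2_{loc}$. The delicate point is that $f=R_\lambda(g)$ does not solve the \emph{homogeneous} stationary transport equation appearing in Lemma \ref{lemL2}; but the resolvent identity lets me rewrite it as one with a source that is itself controlled in $L^2$, so that the a priori estimate of Lemma \ref{lemL2} applies verbatim.

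First I would set $f=R_\lambda(g)$, so that by Proposition \ref{lemV} the function $f\in L^2(dx\otimes dv)$ solves $\lambda f + v.\nabla_x f + F.\nabla_v f = g$ in the distributional sense. Rearranging gives $v.\nabla_x f + F.\nabla_v f = \tilde g$ with $\tilde g := g - \lambda f \in L^2(dx\otimes dv)$, so the pair $(f,\tilde g)$ satisfies exactly the hypotheses of Lemma \ref{lemL2}. Hence $\rho=\int f\,\Psi(v)\,dv \in H^{1/4}_x$ with
\begin{equation*}
\Vert \rho \Vert_{H^{1/4}_x} \leq C\left( \Vert F \Vert_{L^\infty_x}\, \Vert f \Vert_{L^2_{x,v}} + \Vert \tilde g \Vert_{L^2_{x,v}} \right).
\end{equation*}
Next I would use the operator-norm bound $\Vert R_\lambda \Vert = 1/\lambda$ from Proposition \ref{lemV} to get $\Vert f \Vert_{L^2_{x,v}} \leq \lambda^{-1}\Vert g \Vert_{L^2_{x,v}}$, whence $\Vert \tilde g \Vert_{L^2_{x,v}} \leq \Vert g \Vert_{L^2_{x,v}} + \lambda \Vert f \Vert_{L^2_{x,v}} \leq 2\Vert g \Vert_{L^2_{x,v}}$. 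Combining the two estimates yields a uniform bound
\begin{equation*}
\Vert T_{\lambda,\Psi}\, g \Vert_{H^{1/4}_x} \leq C'(\lambda, \Vert F\Vert_{L^\infty_x}, \Psi)\, \Vert g \Vert_{L^2_{x,v}},
\end{equation*}
so that $T_{\lambda,\Psi}$ is \emph{bounded} from $L^2(dx\otimes dv)$ into $H^{1/4}(\mathbb{R}^d)$.

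Finally, compactness follows by writing $T_{\lambda,\Psi}$ as this bounded map into $H^{1/4}$ post-composed with the inclusion $H^{1/4}(\mathbb{R}^d)\hookrightarrow L^2_{loc}(dx)$. Given a bounded sequence $(g_n)$ in $L^2$, the images $(\rho_n)$ are bounded in $H^{1/4}(\mathbb{R}^d)$; restricted to any ball $B_R$ they are bounded in $H^{1/4}(B_R)$, which embeds compactly into $L^2(B_R)$ by Rellich's theorem for fractional Sobolev spaces (note $sp=1/2<2^{*}$ is harmless: the target $L^2$ sits strictly below the critical exponent in every dimension). A diagonal extraction over an exhaustion of $\mathbb{R}^d$ by balls then produces a subsequence converging in $L^2_{loc}(dx)$. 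The step I would treat with the most care is precisely this localization: global $H^{1/4}(\mathbb{R}^d)$ does \emph{not} embed compactly into $L^2(\mathbb{R}^d)$ because of the loss of compactness at spatial infinity, so the compactness must be read off on each bounded set and patched together by the diagonal argument. Everything else is a direct chaining of Lemma \ref{lemL2} with the resolvent bound, which is why the corollary is genuinely straightforward once the $L^2$ averaging lemma is in hand.
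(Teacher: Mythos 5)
Your proposal is correct and takes essentially the same route as the paper: chain the a priori estimate of Lemma \ref{lemL2} (applied to $f=R_\lambda(g)$ with source $g-\lambda f$) with the resolvent bound $\Vert R_\lambda\Vert = 1/\lambda$ from Proposition \ref{lemV} to get $\Vert T_{\lambda,\Psi}(g)\Vert_{H^{1/4}_x}\leq C\Vert g\Vert_{L^2_{x,v}}$, then conclude by Rellich's theorem. You merely spell out two steps the paper leaves implicit, namely the rewriting of the resolvent equation as a stationary transport equation with $L^2$ source and the localization-plus-diagonal argument behind the compact embedding into $L^2_{loc}$.
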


\begin{proof}
Using Lemma \ref{lemL2} and Proposition \ref{lemV}, we have:
$$
\Vert T_{\lambda,\Psi}(g) \Vert_{H^{1/4}_x} \leq C \left( 1 + \Vert F \Vert_{L^\infty} \right) \Vert g \Vert_{L^2_{x,v}}.
$$
The conclusion follows.
\end{proof}

Using this compactness property, as in Proposition 3 of \cite{GLPS}, we can show the next result:

\begin{proposition}
\label{equiXV}
 Let $\mathcal{K}$ be a bounded subset of  $L^1(dx \otimes dv)$ equiintegrable in $x$ and $v$ (in view of the Dunford-Pettis theorem, it means in other words that $\mathcal{K}$ is weakly compact in $L^1$),
then $T_{\lambda,\Psi}(\mathcal{K})$ is relatively strongly compact in $L^1_{loc}(dx)$.
\end{proposition}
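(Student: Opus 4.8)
The plan is to regard $T_{\lambda,\Psi}$, a priori defined on $L^2$, as an operator on $L^1$: the integral representation of Proposition \ref{lemV} makes sense for any $g\in L^1(dx\otimes dv)$, and Liouville's theorem (the flow $Z(-s;\cdot)$ preserves Lebesgue measure, its Jacobian being $1$) gives the contraction $\Vert R_\lambda g\Vert_{L^1_{x,v}}\leq \tfrac1\lambda \Vert g\Vert_{L^1_{x,v}}$, whence $\Vert T_{\lambda,\Psi}g\Vert_{L^1_x}\leq \tfrac{\Vert\Psi\Vert_{L^\infty}}{\lambda}\Vert g\Vert_{L^1_{x,v}}$. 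The core idea is then a truncation: for $M>0$ and $g\in\mathcal K$ split $g=g_M+g^M$ with $g_M=g\,\mathbbm{1}_{\vert g\vert\leq M}$ and $g^M=g\,\mathbbm{1}_{\vert g\vert> M}$, and to exploit the decomposition $T_{\lambda,\Psi}g=T_{\lambda,\Psi}g_M+T_{\lambda,\Psi}g^M$, in which the first term will be relatively compact for each fixed $M$ and the second will be uniformly small as $M\to+\infty$.

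For the truncated part one has $\Vert g_M\Vert_{L^2}^2\leq M\Vert g\Vert_{L^1}\leq M\sup_{\mathcal K}\Vert\cdot\Vert_{L^1}$, so for each fixed $M$ the family $\{g_M:g\in\mathcal K\}$ is bounded in $L^2(dx\otimes dv)$; by Corollary \ref{corL2} its image $\{T_{\lambda,\Psi}g_M\}$ is relatively compact in $L^2_{loc}(dx)$, hence (by Cauchy--Schwarz on any fixed ball, where $L^2$ embeds continuously into $L^1$) relatively compact in $L^1_{loc}(dx)$. For the tail part, the hypothesis that $\mathcal K$ is equiintegrable in $(x,v)$ is, by the Dunford--Pettis theorem (de la Vall\'ee-Poussin), equivalent to uniform integrability, namely $\sup_{g\in\mathcal K}\Vert g^M\Vert_{L^1}\to 0$ as $M\to+\infty$. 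Combined with the $L^1$ bound above this yields $\sup_{g\in\mathcal K}\Vert T_{\lambda,\Psi}g^M\Vert_{L^1_x}\leq \tfrac{\Vert\Psi\Vert_{L^\infty}}{\lambda}\sup_{g\in\mathcal K}\Vert g^M\Vert_{L^1}\to 0$.

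It remains to turn this into relative compactness. Fix a ball $B_R\subset\mathbb R^d$ and $\delta>0$, and choose $M$ so large that the tail term is $<\delta$ in $L^1(B_R)$ uniformly over $\mathcal K$. Then every $\mathbbm{1}_{B_R}T_{\lambda,\Psi}g$ lies within $\delta$ of the relatively compact, hence totally bounded, set $\{\mathbbm{1}_{B_R}T_{\lambda,\Psi}g_M:g\in\mathcal K\}$; covering the latter by finitely many $\delta$-balls shows that $\{\mathbbm{1}_{B_R}T_{\lambda,\Psi}g:g\in\mathcal K\}$ is totally bounded, hence relatively compact in the complete space $L^1(B_R)$. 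Letting $R$ run over a countable exhaustion of $\mathbb R^d$ gives relative compactness in $L^1_{loc}(dx)$ by a diagonal argument. The only genuinely delicate point is the uniform smallness of the tail, which is precisely where the equiintegrability hypothesis is used; the remainder is the standard ``compact plus uniformly small implies totally bounded'' argument, together with the elementary $L^1$ contraction for $R_\lambda$ furnished by measure preservation.
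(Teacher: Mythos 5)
Your proof is correct and follows essentially the same route as the paper: the same truncation $g=g\,\mathbbm{1}_{\{|g|\leq M\}}+g\,\mathbbm{1}_{\{|g|>M\}}$, with the bounded part handled through the $L^2$ compactness of Corollary \ref{corL2} and the tail made uniformly small by equiintegrability together with the $L^1$ continuity of $T_{\lambda,\Psi}$, concluding by the standard ``relatively compact plus uniformly small implies totally bounded'' argument. The only cosmetic difference is that you justify the $L^1$ operator bound explicitly via measure preservation of the flow, where the paper simply invokes the $L^p$ bound of Proposition \ref{lemV}.
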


\begin{proof} We recall the proof of this result for the sake of completeness.

The proof is based on a real interpolation argument. We fix a parameter $\eta>0$. For any $g \in
\mathcal{K}$ and any $\alpha
 >0$, we may write :
\begin{equation*}
g=g_1^\alpha +g_2^\alpha,
\end{equation*}
with
\begin{eqnarray*}
g_1^\alpha&=&\mathbbm{1}_{\{|g(x,v)|>\alpha\}}g, \\
g_2^\alpha&=&\mathbbm{1}_{\{|g(x,v)|\leq\alpha\}}g.
\end{eqnarray*}

Then, by linearity of $T_{\lambda,\Psi}$, we write $u=T_{\lambda,\Psi} (g)=u_1+ u_2$, with
$u_1=T_{\lambda,\Psi}(g_1^\alpha)$ and $u_2=T_{\lambda,\Psi}(g_2^\alpha)$.

Let $K$ be a fixed compact set of $\mathbb{R}^{d}_x$.

We clearly have, since $T_{\lambda,\Psi}$ is linear continuous on $L^1(K)$:
\begin{equation*}
\Vert u_1 \Vert_{L^1_x(K)} \leq C\Vert g_1^\alpha \Vert_1.
\end{equation*}
We notice that:
$$\left\vert \{(x,v), |g(x,v)|>\alpha\}\right\vert \leq \frac{1}{\alpha} \Vert g
\Vert_{L^1} \leq \frac{1}{\alpha} C.$$

Since $\mathcal{K}$ is equiintegrable, there exists $
\alpha
 >0$ such that for any $ g \in \mathcal{K}$ :
\begin{equation*}
\int |g\mathbbm{1}_{\{|g(x,v)|>\alpha\}}|dxdv \leq \frac{\eta}{C}.
\end{equation*}

Consequently for $\alpha$ large enough, we have:
\begin{equation*}
\Vert u_1 \Vert_{L^1_x(K)} \leq \eta.
\end{equation*}

The parameter $\alpha$ being fixed, we clearly see that $\{g_2^\alpha, g\in \mathcal{K}\}$
is a bounded subset of  $L^1_{x,v}\cap L^\infty_{x,v}$, and consequently of $L^2_{x,v}$. Because of Corollary \ref{corL2}, $\{u_2, u_2=T_{\lambda,\Psi}( g_2^\alpha), g
\in
\mathcal{K}\}$ is relatively compact in $L^2_{{loc}}(dx)$. In particular it is relatively compact in $L^1_{loc}(dx)$.

As a result, we have shown that for any $\eta >0$, there exists $\mathcal{K}_\eta \subset
L^1_x(K)$ compact, such that $T_{\lambda,\Psi}(\mathcal{K}) \subset \mathcal{K}_\eta +
B(0,\eta)$. So this family is precompact and consequently it is compact since $L^1_x(K)$ is a Banach space.

\end{proof}
We deduce the preliminary result (which means that the first point implies the second in Theorem \ref{L1}):

    \begin{theorem}
    \label{L1faible}
Let $(f_\epsilon)$ a family of $L^1_{loc}(dx\otimes dv)$ locally
equiintegrable in $x$ and $v$ such that $(v.\nabla_x f_\epsilon +
F.\nabla_v f_\epsilon)$ is a bounded family of
$L^1_{loc}(dx\otimes dv)$. Then for all $\Psi \in \mathcal{C}^{1}_c(\mathbb{R}^d)$,
the family $\rho_\epsilon(x)=\int f_\epsilon(x,v)\Psi(v)dv$ is relatively compact in 
$L^1_{{loc}}(dx)$.
\end{theorem}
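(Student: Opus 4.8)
The plan is to reduce the statement to Proposition \ref{equiXV} by means of the resolvent $R_\lambda$ of Proposition \ref{lemV}, exploiting the fact that its $L^1$ operator norm is $1/\lambda$. The guiding idea is that the resolvent parameter $\lambda$ will play the role that the truncation threshold $\alpha$ played in the proof of Proposition \ref{equiXV}.

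First I would localize in order to pass from the $L^1_{loc}$ hypotheses to the global $L^1$ setting required by Proposition \ref{equiXV}. Fixing a compact $K\subset\mathbb{R}^d_x$ on which I want compactness, and recalling that $\Psi$ is compactly supported in $v$, I would choose $\chi\in\mathcal{C}^\infty_c(\mathbb{R}^d_x\times\mathbb{R}^d_v)$ equal to $1$ on a neighbourhood of $K\times\operatorname{supp}\Psi$ and set $\tilde f_\epsilon=\chi f_\epsilon$, so that $\rho_\epsilon(x)=\int\tilde f_\epsilon(x,v)\Psi(v)\,dv$ for $x\in K$. The family $\{\tilde f_\epsilon\}$ is then bounded in $L^1(dx\otimes dv)$ and, since $\chi$ is compactly supported and $f_\epsilon$ is locally equiintegrable, globally equiintegrable in $x$ and $v$ (hence weakly compact by Dunford--Pettis). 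Writing the transport equation for the cut-off,
\begin{equation*}
v.\nabla_x\tilde f_\epsilon+F.\nabla_v\tilde f_\epsilon=\chi\,(v.\nabla_x f_\epsilon+F.\nabla_v f_\epsilon)+(v.\nabla_x\chi+F.\nabla_v\chi)\,f_\epsilon=:\tilde g_\epsilon,
\end{equation*}
I would check that $\tilde g_\epsilon$ is bounded in $L^1(dx\otimes dv)$: the first term is controlled by the local bound on $v.\nabla_x f_\epsilon+F.\nabla_v f_\epsilon$, and in the second the coefficient is bounded (compact $v$-support and $F\in L^\infty_{loc}$) while $f_\epsilon$ is bounded in $L^1_{loc}$.

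Next I would use, for any $\lambda>0$, the resolvent identity furnished by the uniqueness part of Proposition \ref{lemV} (valid for $L^1$ data as well, since the explicit formula and Liouville's theorem give $\Vert R_\lambda\Vert_{L^1\to L^1}=1/\lambda$), namely $\tilde f_\epsilon=\lambda R_\lambda(\tilde f_\epsilon)+R_\lambda(\tilde g_\epsilon)$. Integrating against $\Psi$ yields, on $K$, the decomposition $\rho_\epsilon=\lambda\,T_{\lambda,\Psi}(\tilde f_\epsilon)+T_{\lambda,\Psi}(\tilde g_\epsilon)$. For each fixed $\lambda$, Proposition \ref{equiXV} applies to the equiintegrable family $\{\tilde f_\epsilon\}$ and shows that $\{\lambda\,T_{\lambda,\Psi}(\tilde f_\epsilon)\}$ is relatively compact in $L^1_{loc}(dx)$. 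For the remainder I would use only boundedness, estimating
\begin{equation*}
\Vert T_{\lambda,\Psi}(\tilde g_\epsilon)\Vert_{L^1_x(K)}\leq\Vert\Psi\Vert_{L^\infty}\,\Vert R_\lambda(\tilde g_\epsilon)\Vert_{L^1_{x,v}}\leq\frac{\Vert\Psi\Vert_{L^\infty}}{\lambda}\Vert\tilde g_\epsilon\Vert_{L^1_{x,v}}\leq\frac{C}{\lambda},
\end{equation*}
with $C$ independent of $\epsilon$ and $\lambda$.

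Finally I would conclude exactly as at the end of Proposition \ref{equiXV}: given $\eta>0$, taking $\lambda$ large enough that $C/\lambda\leq\eta$ exhibits $\{\rho_\epsilon\}$ as contained, up to $\eta$ in $L^1(K)$, in the relatively compact set $\{\lambda\,T_{\lambda,\Psi}(\tilde f_\epsilon)\}$; total boundedness then gives relative compactness in $L^1(K)$, and since $K$ is arbitrary, in $L^1_{loc}(dx)$. I expect the main (and only genuinely delicate) point to be the asymmetric use of the hypotheses: equiintegrability must be invoked solely for the $\tilde f_\epsilon$ piece, whereas the source $\tilde g_\epsilon$ --- which is merely bounded and need not be equiintegrable --- has to be absorbed purely through the $1/\lambda$ gain of the resolvent, uniformly in $\epsilon$. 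Verifying that this uniform smallness indeed holds, together with the clean separation of the two roles of $\lambda$, is the crux of the argument.
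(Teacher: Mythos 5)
Your proposal is correct and follows essentially the same route as the paper: after the same cut-off localization, you use the resolvent identity $f=\lambda R_\lambda f+R_\lambda(v.\nabla_x f+F.\nabla_v f)$, apply Proposition \ref{equiXV} to the equiintegrable piece, and absorb the source term through the $1/\lambda$ bound on $R_\lambda$ by choosing $\lambda$ large, concluding by total boundedness exactly as in the paper. The only cosmetic difference is that the paper fixes $\lambda$ explicitly in terms of $\eta$ and $\sup_\epsilon\Vert v.\nabla_x f_\epsilon+F.\nabla_v f_\epsilon\Vert_{L^1}$ before estimating, whereas you state the uniform bound $C/\lambda$ first and then choose $\lambda$; the substance is identical.
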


\begin{proof}
Let $\Psi \in\mathcal{C}^{1}_c(\mathbb{R}^d)$. Let $R>0$ be a large number such that $\operatorname{Supp} \Psi \subset B(0,R)$; we intend to show that
$(\mathbbm{1}_{B(0,R)}(x)\rho_\epsilon(x))$ is compact in $L^1(B(0,R))$. First of all, 
we can assume that the $f_\epsilon$ are compactly supported 
in the same compact set $K \subset
\mathbb{R}^d\times \mathbb{R}^d$, with $B(0,R)\times B(0,R) \subset \mathring{K}$. Indeed we can multiply
the family by a smooth function $\chi$ such that:
\begin{eqnarray*}
 \operatorname{supp} \chi \subset K, \\ 
  \chi \equiv 1 \text{   on   } B(0,R)\times B(0,R).
\end{eqnarray*}

We observe that :
\begin{eqnarray*}
v.\nabla_x(\chi f_\epsilon)&=&\chi(v.\nabla_x
f_\epsilon)+f_\epsilon(v.\nabla_x \chi), \\
F.\nabla_v(\chi f_\epsilon)&=& \chi F.\nabla_v(
f_\epsilon)+f_\epsilon (F. \nabla_v \chi).
\end{eqnarray*}
Thus the family $(\chi f_\epsilon)$ satisfies the same $L^1$ boundedness properties as $(f_\epsilon)$. The equiintegrability property
is also clearly preserved. Furthermore, for any $x$ in $B(0,R)$, we have :
$$\int f_\epsilon(x,v) \Psi(v) dv = \int f_\epsilon(x,v)\chi(v)
\Psi(v) dv$$ Consequently we are now in the case of functions supported in the same compact set.

\par We have for all $
\epsilon>0, \lambda>0$, by linearity of the resolvent $R_\lambda$ defined in Proposition \ref{lemV} :
\begin{equation*}
\begin{split}
\int f_\epsilon(x,v) \Psi(v) dv =& \int R_\lambda (\lambda
f_\epsilon
+ v.\nabla_x f_\epsilon + F.\nabla_v f_\epsilon)\Psi(v)dv \\
=& \lambda \int (R_\lambda f_\epsilon)(x,v)\Psi(v)dv + \int
(R_\lambda (v.\nabla_x f_\epsilon + F.\nabla_v
f_\epsilon))(x,v)\Psi(v)dv.
\end{split}
\end{equation*}

Let $\eta>0$. We take $\displaystyle{\lambda=\sup_\epsilon \frac{\Vert
(v.\nabla_x f_\epsilon + F.\nabla_v
f_\epsilon)\Vert_{L^1_{x,v}}\Vert \Psi\Vert_{L^\infty}}{\eta}}$.

 Then we have by Proposition \ref{lemV} :
\begin{equation*}
\begin{split}
\left\Vert \int (R_\lambda (v.\nabla_x f_\epsilon+ F.\nabla_v
f_\epsilon))(x,v)\Psi(v)dv \right\Vert_{L^1_x}\leq& \Vert R_\lambda
(v.\nabla_x f_\epsilon + F.\nabla_v f_\epsilon)
\Vert_{L^1_{x,v}}\Vert \Psi \Vert_{L^\infty_v} \\
\leq& \frac{1}{\lambda}\Vert (v.\nabla_x f_\epsilon + F.\nabla_v f_\epsilon)\Vert_{L^1_{x,v}}\Vert \Psi\Vert_{L^\infty_v} \\
\leq& \eta.
\end{split}
\end{equation*}

Moreover, since $(f_\epsilon)$ is bounded in $L^1(dx \otimes dv)$ and equiintegrable in $x$ and $v$, Proposition \ref{equiXV} implies that the family $(\int
R_\lambda(f_\epsilon)\Psi(v)dv)$ is relatively compact in
$L^1_{x}(B(0,R))$. Finally we can argue as for the end of the proof of Proposition \ref{equiXV}: for all $\eta >0$, there exists $K_\eta \subset
L^1_{x}(B(0,R))$ compact, such that $(\rho_\epsilon)\subset K_\eta +
B(0,\eta)$. So this family is precompact and consequently it is compact since $L^1(B(0,R))$ is a Banach space.

\end{proof}

\section{Mixing properties of the operator $v.\nabla_x  + F.
\nabla_v$}
\label{mix}

\subsection{Free transport case}

In the case when $F=0$, Bardos and Degond in \cite{BD} proved a mixing result (also referred to as a dispersion result for large time asymptotics) which is a key argument in the proof of Theorem \ref{L1} (with $F=0$) by Golse and Saint-Raymond \cite{GolSR}. This kind of estimate was introduced for the study of classical solutions of the Vlasov-Poisson equation in three dimensions and for small initial data.

\begin{lemma}\label{freemixing} Let $f$ be the solution to:
   \begin{equation}
\label{transport} \left\{
    \begin{array}{ll}
      \partial_t f + v.\nabla_x f =0, \\
      f(0,.,.)=f^0.
          \end{array}
  \right.
\end{equation}

Then for all $t>0$:
\begin{equation}
\label{esti}
 \Vert{f(t)}\Vert_{L^\infty_x(L^1_v)} \leq
\frac{1}{|t|^{d}} \Vert{f^0}\Vert_{L^1_x(L^\infty_v)}.
\end{equation}
\end{lemma}

For further results and related questions (Strichartz estimates...), we refer to Castella and Perthame \cite{CasPer} and Salort \cite{S1}, \cite{S2}, \cite{S3}.

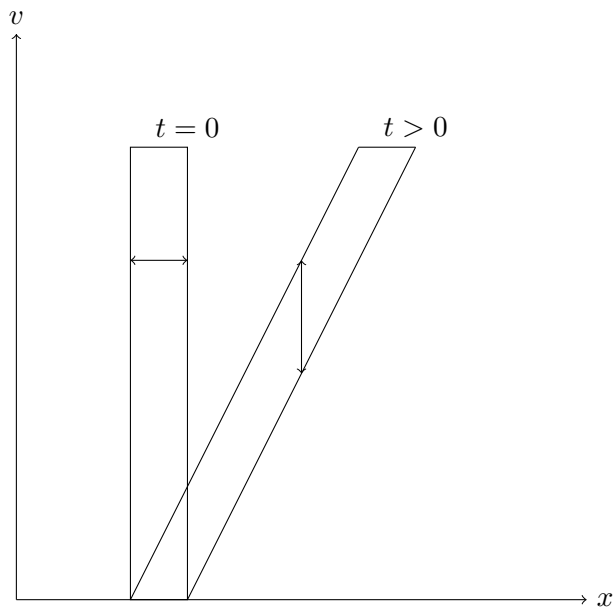
\begin{figure}[htbp]
\begin{center}
\begin{tikzpicture}[scale=1.5]
\draw[->] (0,0) -- (5,0) node[right] {$x$} coordinate(x axis);
\draw[->] (0,0) -- (0,5) node[above] {$v$} coordinate(v axis);
\draw (1,0) rectangle (1.5,4) node[above] {$t=0$} ;
\draw[<->] (1,3) -- (1.5,3);
\draw (1,0) -- (3,4);
\draw (1.5,0) -- (3.5,4);
\draw (3,4) -- (3.5,4) node[above] {$t>0$};
\draw[<->] (2.5,2) -- (2.5,3);
\end{tikzpicture}
\caption{Mixing property for free transport}
\label{melange}
\end{center}
\end{figure}

When $f^0$ is the indicator function of a set with "small" measure with respect to $x$,
then the previous estimate (\ref{esti}) asserts that for $t>0$, $f(t)$ is for any fixed $x$ the indicator function
of a set with a "small" measure in $v$ (at least that we may estimate): this property is crucial for the following. 
In (\ref{esti}), there is blow-up when $t \rightarrow 0$, which is intuitive, but this does not matter
since we have nevertheless a control of the left-hand side for any positive time.

Actually, for our purpose, parameter $t$ is an artificial time (it does not have the usual physical meaning). It appears as an interpolation parameter in Lemma \ref{parts}, and can be taken rather small. This is the reason why local in time mixing is sufficient. We will consequently look for local in time mixing properties. Anyway, the explicit study in Example 2 below shows that the dispersion inequality is in general false for large times when $F\neq 0$.

\begin{proof}[Proof of Lemma \ref{freemixing}] The proof of this result is based on the explicit solution to (\ref{transport}), which is:
\begin{equation*} f(t,x,v)=f^0(x-tv,v)\end{equation*}
We now evaluate:
\begin{eqnarray*}
\Vert f(t) \Vert_{L^\infty_x(L^1_v)} &=& \sup_x \int f^0(x-tv,v)dv
\\&=& \sup_x \int f^0(z,\frac{x-z}{t}) |t|^{-d}dz
\\&\leq& |t|^{-d}\int \Vert f^0(z,.) \Vert_{\infty} dz
\\&\leq& |t|^{-d}\Vert f^0 \Vert_{L^1_x(L^\infty_v)}.
\end{eqnarray*} 
The key argument is the change of variables
$x-tv \mapsto z$, the jacobian of which is equal to $t^{-d}$. 
\end{proof}

We intend to do the same in the more complicated case when $f$ is the solution of a transport equation with
$F \neq 0$. Let us mention that in \cite{BD}, Bardos and Degond actually prove the dispersion result
for non zero force fields but with a polynomial decay in time. Here, this is not the case (the field $F$
does not even depend on time $t$), but we will prove that the result holds anyway for small times.

\subsection{Study of two examples}

In the following examples, $f$ is the explicit solution to the transport equation (\ref{transport1}) with an initial condition $f^0$ 
and a force deriving from a potential. 
\\

\textbf{Example 1} 

Force $F= -\nabla_x V$, with $V = -|x|^2/2$.

 Let $f$ be the solution to:
   \begin{equation}
 \left\{
    \begin{array}{ll}
      \partial_t f + v.\nabla_x f + x.\nabla_v f =0, \\
      f(0,.,.)=f^0.
          \end{array}
  \right.
\end{equation}
The effect of such a potential will be to make the particles escape faster to infinity.  So we expect to have results very similar to those of lemma \ref{freemixing}. 

After straightforward computations we get :
$$f(t,x,v)=f^0\left(x \left(\frac{e^t+e^{-t}}{2}\right)+v  
\left(\frac{e^{-t}-e^{t}}{2} \right),x \left(\frac{e^{-t}-e^{t}}{2}  
\right)+v \left(\frac{e^t+e^{-t}}{2} \right) \right),$$

which allows to show the same dispersion estimate with a factor $\frac{e^t-e^{-t}}{2}$ instead of $t$.

For all $t>0$, we have:
\begin{equation} \Vert{f(t)}\Vert_{L^\infty_x(L^1_v)} \leq
\frac{2^d}{(e^t-e^{-t})^{d}} \Vert{f^0}\Vert_{L^1_x(L^\infty_v)}.
\end{equation}

\textbf{Example 2}

(Harmonic potential) Force $F= -\nabla_x V$, with $V =|x|^2/2$.

Let $f$ be the solution to:
   \begin{equation}
 \left\{
    \begin{array}{ll}
      \partial_t f + v.\nabla_x f - x.\nabla_v f =0, \\
      f(0,.,.)=f^0.
          \end{array}
  \right.
\end{equation}

With such a potential, particles are expected to be confined and consequently do not drift to infinity.
For this reason, it is hopeless to prove the analogue of Lemma \ref{freemixing} for large times (here there is no dispersion). As mentioned before, it does not matter since
we only look for a result valid for small times. We expect that there is enough mixing in the phase space to prove the result.

After straightforward computations we explicitly have :
$$f(t,x,v)=f^0(x\cos t - v\sin t,v\cos t + x \sin t).$$ 
We observe here that the solution $f$ is periodic with respect to time. Thus, as expected, it is not possible to prove any decay when $t\rightarrow +\infty$; nevertheless we can prove a mixing  estimate with a factor   $\vert \sin t \vert$ instead of $t$.

For all $t>0$:
\begin{equation} \Vert{f(t)}\Vert_{L^\infty_x(L^1_v)} \leq
\frac{1}{\vert \sin t\vert^{d}} \Vert{f^0}\Vert_{L^1_x(L^\infty_v)}.
\end{equation}
Of course, this estimate is useless when $t= k \pi, k \in \mathbb{N}^*$.

\begin{remark}
We notice that $\frac{e^t-e^{-t}}{2} \sim_0 t$ and $\sin (t)\sim_0 t$, which seems encouraging.
\end{remark}

\subsection{General case : $F$ with Lipschitz regularity}

The study of these two examples suggests that at least for small times, the mixing estimate is still satisfied, maybe
with a corrector term which does not really matter. 

One nice heuristic way to understand this is to see that since $F$ is quite smooth, the dynamics associated to the operator $v.\nabla_x+ F.\nabla_v$ is expected to be close to those of free transport, at least for small times.

Let $X(t;x,v)$ and $V(t;x,v)$ be the diffeomorphisms introduced in the method of characteristics in Section \ref{first} and defined in (\ref{carac}).

Using Taylor's formula, we get by definition of $X$  
and $V$ :
\begin{eqnarray*}
X(-t;x,v)=x-tv+  \int_0^t (t-s) F( X(-s;x,v))ds.
\end{eqnarray*}

We recall Rademacher's theorem which asserts that $W^{1,\infty}$ functions are almost everywhere derivable. Hence, using Lebesgue domination theorem,
we get:

\begin{eqnarray}
\label{id}
\partial_v [X(-t;x,v)]=-tId +\int_0^t (t-s) \nabla_x F(X(-s;x,v)) 
\partial_v [X(-s;x,v)]ds . 
\end{eqnarray}

We deduce the estimate :
\begin{eqnarray*}
\Vert \partial_v [X(-t;x,v)]\Vert_\infty \leq t +  \int_0^t (t-s)
\Vert \nabla_x F \Vert_\infty \Vert \partial_v
[X(-s;x,v))]\Vert_\infty ds.
\end{eqnarray*}

Gronwall's lemma implies then that:

\begin{equation}
\label{Gronwall} \Vert \partial_v [X(-t;x,v)]\Vert_\infty
\leq t e^{\frac{t^2}{2}\Vert \nabla_x F \Vert_\infty}.
\end{equation}

We can also take the determinant of identity (\ref{id}) :

\begin{equation}
\label{determ}
\begin{split}
\det (\partial_v [X(-t;x,v)] )=& \\(-t)^d& \det \left(Id -   \frac{1}{t}\int_0^t (t-s) \nabla_x F(X(-s;x,v)) 
\partial_v [X(-s;x,v)]ds \right).
\end{split}
\end{equation}

The right-hand side is the determinant of a matrix of the form 
$Id+A(t)$ where $A$ is a matrix whose $L^\infty$ norm is small for small times  $t$ (one can use
estimate (\ref{Gronwall}) to ensure that $\Vert
A(t) \Vert_\infty=o(t)$). Consequently, in a neighborhood of $0$,
for any fixed $x$, $\partial_v [X(-t;x,v)]$ is invertible. Furthermore the map $v\mapsto X(-t;x,v))$  is
injective for small positive times. Indeed, let $v \neq v'$. We compare:

\[
X(-t;x,v')-X(-t;x,v)=t(v-v')+ \int_0^t (t-s) [F( X(-s;x,v'))-F( X(-s;x,v)) ]ds.
\]
Consequently we have:
\[
\vert X(-t;x,v')-X(-t;x,v)\vert \leq t\vert v-v' \vert + \int_0^t (t-s) \Vert \nabla_x F \Vert_{L^\infty}  \vert X(-s;x,v')- X(-s;x,v) \vert ds.
\]
Thus, by Gronwall inequality we obtain:
\[
\vert X(-t;x,v')-X(-t;x,v)\vert  \leq t\vert v-v' \vert e^{\frac{t^2}{2}\Vert \nabla_x F\Vert_{L^\infty}}.
\]
Finally we observe that:
\begin{equation*}
\begin{split}
\vert X(-t;x,v')-X(-t;x,v)\vert \geq & t\vert v-v' \vert - \left\vert\int_0^t (t-s) [F( X(-s;x,v'))-F( X(-s;x,v)) ]ds \right\vert\\
\geq & t\vert v-v' \vert - \int_0^t (t-s) \Vert \nabla_x F \Vert_{L^\infty}  \vert X(-s;x,v')- X(-s;x,v) \vert ds \\
\geq & \vert v-v' \vert \left(t - \int_0^t (t-s) s e^{\frac{s^2}{2}\Vert \nabla_x F\Vert_{L^\infty}} \Vert \nabla_x F \Vert_{L^\infty}ds\right).
\end{split}
\end{equation*}
Consequently, there is a maximal time $\tau_0>0$, depending only on $\Vert \nabla_x F \Vert_{L^\infty}$ such that for any $\vert t \vert\leq \tau_0$, we have :
\[
\vert X(-t;x,v')-X(-t;x,v)\vert \geq \frac t 2 \vert v-v' \vert.
\]
This proves our claim.

Thus, by the local inversion theorem, this map is a $\mathcal{C}^1$
diffeomorphism on its image. 

We have now the following elementary quantitative estimate :

\begin{lemma}
Let $t\mapsto A(t)$ be a continuous map defined on a neighborhood of $0$, such that $\Vert A(t) \Vert_\infty=o(t)$. Then for small times:
\begin{equation*}
\det(Id+A(t))\geq 1 - d! \Vert A(t) \Vert_\infty.
\end{equation*}
We recall that $d$ is the space dimension and $d!=1\times 2 \times  ... \times d$.
\end{lemma}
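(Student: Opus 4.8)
The plan is to expand $\det(\mathrm{Id} + A(t))$ as a polynomial in the entries $a_{ij}(t)$ of $A(t)$, isolate the part that is linear in these entries, and show that for small times everything of higher order is negligible. Writing $m := \Vert A(t)\Vert_\infty$ (so that $m \to 0$ as $t \to 0$, since $\Vert A(t)\Vert_\infty = o(t)$), I would use the classical principal-minor expansion
\begin{equation*}
\det(\mathrm{Id}+A(t)) = \sum_{k=0}^{d}\sigma_k(A(t)), \qquad \sigma_k(A) = \sum_{\vert S\vert = k}\det\big(A[S]\big),
\end{equation*}
where $A[S]$ denotes the principal submatrix indexed by $S \subseteq \{1,\dots,d\}$, with $\sigma_0 = 1$ and $\sigma_1 = \operatorname{tr} A$.

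The first two terms are handled directly: $\sigma_0 = 1$, and $\sigma_1 = \operatorname{tr}A(t) \ge -d\,m$, since the trace is a sum of $d$ diagonal entries, each bounded below by $-m$. For the remaining terms I would estimate each $k \times k$ principal minor by the Leibniz formula: it is a signed sum of $k!$ products of $k$ entries, hence $\vert\det(A[S])\vert \le k!\,m^k$; as there are $\binom dk$ such minors, $\vert\sigma_k(A)\vert \le \frac{d!}{(d-k)!}\,m^k$ for $2 \le k \le d$. Summing, these contribute at most $\sum_{k=2}^{d}\frac{d!}{(d-k)!}m^k = O(m^2)$. Putting the pieces together gives
\begin{equation*}
\det(\mathrm{Id}+A(t)) \ge 1 - d\,m - \sum_{k=2}^{d}\frac{d!}{(d-k)!}\,m^k.
\end{equation*}
Since the higher-order sum is $O(m^2)$ while the target correction is only of order $m$, for $m$ small enough (hence for $t$ small) the slack $(d!-d)\,m$ between $d!\,m$ and $d\,m$ absorbs the quadratic remainder, which yields the claimed bound $\det(\mathrm{Id}+A(t)) \ge 1 - d!\,m$.

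I expect the only delicate point to be the bookkeeping that pins the constant at exactly $d!$ rather than merely some dimensional constant. The estimate above closes comfortably as soon as $d \ge 3$, where $d! - d > 0$ provides genuine slack; the borderline low-dimensional cases are best checked by hand, namely $d = 1$, where $\det = 1 + a_{11} \ge 1 - m$ is immediate, and $d = 2$, where one should instead group the expansion as $(1+a_{11})(1+a_{22}) - a_{12}a_{21} \ge (1-m)^2 - m^2 = 1 - 2m$, the $d=2$ bound being tight. Since the eventual application only needs $\det(\mathrm{Id}+A(t)) \to 1$ as $t \to 0$ together with an explicit lower bound controlling $\det(\partial_v X(-t))=(-t)^d\det(\mathrm{Id}+A(t))$, the precise value of the constant is not essential; the argument above simply exhibits $d!$ as an admissible choice.
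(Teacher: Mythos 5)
Your proof is correct, and in fact it supplies something the paper does not: the author states this lemma as an ``elementary quantitative estimate'' and applies it immediately, without giving any proof at all, so there is no argument in the paper to compare yours against. Your route --- expanding $\det(\mathrm{Id}+A)$ into the sums $\sigma_k$ of principal $k\times k$ minors, bounding $\sigma_1=\operatorname{tr}A\ge -d\,m$ and $\vert\sigma_k\vert\le\binom{d}{k}k!\,m^k=\frac{d!}{(d-k)!}m^k$, and absorbing the $O(m^2)$ tail into the slack $(d!-d)\,m$ --- is a natural and fully rigorous way to do it, and your bookkeeping correctly identifies the one genuinely delicate point: for $d=2$ the slack $d!-d$ vanishes, so the generic estimate $1-2m-2m^2$ falls short of $1-2m$, and one must instead keep the exact grouping $(1+a_{11})(1+a_{22})-a_{12}a_{21}\ge(1-m)^2-m^2=1-2m$, where the $+m^2$ from the diagonal product cancels the off-diagonal contribution; your example-checking also correctly shows this bound is tight at $d=2$. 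Two minor observations: your argument only uses $\Vert A(t)\Vert_\infty\to 0$ as $t\to 0$, which is weaker than the stated hypothesis $\Vert A(t)\Vert_\infty=o(t)$ (that stronger decay is what the paper needs in its application, where $A(t)=-\frac1t\int_0^t(t-s)\nabla_xF\,\partial_v X\,ds$ is $O(t^2)/t$ by the Gronwall estimate, so your lemma is in fact slightly more general than stated); and your closing remark is also consistent with how the lemma is used, since the paper only needs $\det(\mathrm{Id}+A(t))\ge\frac12$ to obtain $\vert\det(\partial_v X(-t))\vert^{-1}\le 2\vert t\vert^{-d}$, so the precise constant $d!$ is indeed inessential.
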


We apply this lemma to (\ref{determ}), which allows us to say that there exists a maximal time $\tau>0$ such that for any $\vert t \vert\leq \tau$, we have :

\begin{equation}
\label{det}
|\det (\partial_v [X(-t;x,v)]|^{-1} \leq 2\vert t\vert^{-d}.
\end{equation}

We have proved that $v \mapsto X(-t;x,v)$ is a $\mathcal{C}^1$
diffeomorphism such that the jacobian of its inverse satisfies (\ref{det}) in a neighborhood of  $t=0$. We can consequently conclude as in the proof of Lemma \ref{freemixing} (by performing the change of variables
$X(-t;x,v)\mapsto v$).

As a result we have proved the proposition :

\begin{proposition}
\label{mixing}
 Let $ F(x)$ be a Lipschitz vector field. There exists a maximal time $\tau>0$  (depending only on $\Vert \nabla_x F \Vert _{L^{\infty}}$)  such that, if $f$ is the solution to
the transport equation:

   \begin{equation}
 \left\{
    \begin{array}{ll}
      \partial_t f + v.\nabla_x f +F.\nabla_v f=0, \\
      f(0,.,.)=f^0\in L^p(dx \otimes dv).
    \end{array}
  \right.
\end{equation}
Then:

\begin{equation}
\forall \vert t \vert \leq \tau, \Vert{f(t)}\Vert_{L^\infty_x(L^1_v)} \leq
\frac{2}{|t|^{d}} \Vert{f^0}\Vert_{L^1_x(L^\infty_v)}.
\end{equation}
\end{proposition}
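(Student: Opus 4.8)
The plan is to mimic the proof of Lemma \ref{freemixing}, replacing the free flow $x-tv$ by the exact characteristic flow $X(-t;x,v)$. By the representation formula of Proposition \ref{lemV}, the solution reads $f(t,x,v)=f^0(X(-t;x,v),V(-t;x,v))$, so that for $|t|\le\tau$,
\[
\Vert f(t)\Vert_{L^\infty_x(L^1_v)}=\sup_x \int \bigl|f^0\bigl(X(-t;x,v),V(-t;x,v)\bigr)\bigr|\,dv.
\]
For each fixed $x$ I would perform, in the inner integral, the change of variables $v\mapsto z:=X(-t;x,v)$. Since the target bound is $\Vert f^0\Vert_{L^1_x(L^\infty_v)}=\int \Vert f^0(z,\cdot)\Vert_{L^\infty_v}\,dz$, bounding the integrand crudely by $\Vert f^0(z,\cdot)\Vert_{L^\infty_v}$ reduces everything to controlling the Jacobian factor $|\det\partial_v X(-t;x,v)|^{-1}$ produced by the substitution.

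To make this substitution legitimate I must check that $v\mapsto X(-t;x,v)$ is, for each fixed $x$ and each small $|t|$, a $\mathcal{C}^1$ diffeomorphism onto its image. Injectivity follows from a lower bound: starting from the Taylor identity $X(-t;x,v)=x-tv+\int_0^t(t-s)F(X(-s;x,v))\,ds$, a Gronwall argument on $|X(-t;x,v')-X(-t;x,v)|$ shows that for $|t|$ below a threshold $\tau_0(\Vert\nabla_x F\Vert_\infty)$ one has $|X(-t;x,v')-X(-t;x,v)|\ge \frac{t}{2}|v-v'|$. Differentiating the Taylor identity in $v$ (using Rademacher's theorem and dominated convergence, valid since $F\in W^{1,\infty}$) gives the fixed-point identity (\ref{id}) for $\partial_v X$, and Gronwall yields the bound (\ref{Gronwall}); in particular $\partial_v X(-t;x,v)$ is invertible near $t=0$, so the local inversion theorem upgrades injectivity to the diffeomorphism property.

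The heart of the argument, and the main obstacle, is the quantitative Jacobian estimate (\ref{det}). Factoring $-t\,\mathrm{Id}$ out of (\ref{id}) writes the determinant as in (\ref{determ}), namely $(-t)^d$ times $\det(\mathrm{Id}+A(t))$ where, by (\ref{Gronwall}), $\Vert A(t)\Vert_\infty=o(t)$ as $t\to 0$. The quantitative determinant lemma stated above then gives $\det(\mathrm{Id}+A(t))\ge 1-d!\,\Vert A(t)\Vert_\infty$, which stays above $1/2$ for $|t|\le\tau$, whence $|\det\partial_v X(-t;x,v)|^{-1}\le 2|t|^{-d}$. The delicate point is that all these estimates are uniform in $x$ (which holds because they involve $F$ only through $\Vert\nabla_x F\Vert_\infty$), so that the threshold $\tau$ can indeed be chosen depending only on $\Vert\nabla_x F\Vert_\infty$.

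With (\ref{det}) in hand, the change of variables gives, for $|t|\le\tau$,
\[
\int \bigl|f^0(X(-t;x,v),V(-t;x,v))\bigr|\,dv \le 2|t|^{-d}\int \Vert f^0(z,\cdot)\Vert_{L^\infty_v}\,dz,
\]
where I bounded the second argument of $f^0$ by the $L^\infty_v$ norm and used (\ref{det}) for the Jacobian. Taking the supremum over $x$ yields the claimed estimate $\Vert f(t)\Vert_{L^\infty_x(L^1_v)}\le 2|t|^{-d}\Vert f^0\Vert_{L^1_x(L^\infty_v)}$. I expect the only genuine work to be the Jacobian control of the previous paragraph; once (\ref{det}) is established, the conclusion is a verbatim repetition of the free transport case.
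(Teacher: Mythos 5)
Your proposal is correct and follows essentially the same route as the paper's own proof: the representation $f(t,x,v)=f^0(X(-t;x,v),V(-t;x,v))$, injectivity of $v\mapsto X(-t;x,v)$ via the Gronwall lower bound $\frac{t}{2}\vert v-v'\vert$, the fixed-point identity (\ref{id}) with the Gronwall estimate (\ref{Gronwall}), the factorization (\ref{determ}) combined with the quantitative determinant lemma to get (\ref{det}), and the change of variables exactly as in Lemma \ref{freemixing}. Your added remark that all bounds depend on $F$ only through $\Vert \nabla_x F\Vert_{L^\infty}$, hence are uniform in $x$, is a point the paper uses implicitly, and it is handled correctly here.
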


\begin{remark}
If one writes down more explicit estimates, it can be easily shown that $\tau$ is bounded from below by $T$ defined as the only positive solution to the equation:

\begin{equation}
 \frac{d!}{3 }
 \Vert \nabla_x F\Vert_{\infty}T^2 e^{\Vert \nabla_x F\Vert_{\infty} \frac{T^2}{2}}  =1.
 \end{equation}
\end{remark}

\begin{remark}
Of course, one can replace the factor $2$ in the mixing estimate by any $q>1$ (and the maximal time $\tau$ will depend also on $q$).
\end{remark}

\section{From local equiintegrability in velocity to local equiintegrability in position and velocity}

In this section, we finally proceed as in \cite{GolSR}, with some slight modifications adapted to our case. We start from the following Green's formula :
\begin{lemma}
\label{parts}
Let $f \in L^1(dx\otimes dv)$ with compact support such that $
v.\nabla_x f + F. \nabla_v f \in L^1(dx\otimes dv)$.
Then for all $\Phi^0 \in L^\infty(dx\otimes dv)$, we have for all $t
\in \mathbb{R}^*_+$ :
\begin{equation}
\begin{split}
\int\ f(x,v) \Phi^0(x,v)dxdv=&\int\
f(x,v)\Phi(t,x,v)dxdv\\-&\int_0^t\int \Phi(s,x,v)(v.\nabla_x f +
F. \nabla_v f)dsdxdv,
\end{split}
\end{equation}
where $\Phi$ is the solution to:
\begin{equation}
\label{trans}
\left\{
    \begin{array}{ll}
      \partial_t \Phi+ v.\nabla_x \Phi + F.\nabla_v \Phi= 0 \\
      \Phi_{\vert t=0}=\Phi_0.
    \end{array}
  \right.
\end{equation}
\end{lemma}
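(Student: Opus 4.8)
The plan is to read the asserted identity as the integrated form of the elementary fact that
$s \mapsto \int f(x,v)\,\Phi(s,x,v)\,dx\,dv$ is absolutely continuous with derivative
$\int \Phi(s,x,v)\,(v.\nabla_x f + F.\nabla_v f)\,dx\,dv$. Writing $L=v.\nabla_x+F.\nabla_v$ and $b=(v,F)$, the structural input is that $\operatorname{div} b=\operatorname{div}_x v+\operatorname{div}_v F=0$ (already recorded in the Liouville remark above), so that $L$ is formally skew-adjoint for the $L^2(dx\otimes dv)$ pairing, $\int (L\varphi)\psi=-\int \varphi (L\psi)$, whenever one factor is compactly supported. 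Heuristically, if $g(s)=\int f\,\Phi(s)$, then $\partial_s\Phi=-L\Phi$ gives $g'(s)=-\int f\,(L\Phi)(s)=\int (Lf)\,\Phi(s)$, and integrating from $0$ to $t$ with $\Phi(0)=\Phi^0$ produces exactly the claim. This computation is rigorous when $f$ and $\Phi^0$ are $\mathcal{C}^1_c$, since then $\Phi\in\mathcal{C}^1$ solves (\ref{trans}) classically and there are no boundary terms.

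To make this legitimate at $L^1$/$L^\infty$ regularity I would run the argument through the characteristics, which has the advantage of never differentiating $\Phi^0$. By Proposition \ref{lemV} (whose representation extends verbatim to $\Phi^0\in L^\infty$ because the flow $Z$ is measure preserving, hence isometric on every $L^q$), one has $\Phi(s,x,v)=\Phi^0(Z(-s;x,v))$ and $\Vert\Phi(s)\Vert_{L^\infty}=\Vert\Phi^0\Vert_{L^\infty}$ uniformly in $s$. Changing variables by the volume-preserving map $Z(s)$ turns both $\int f\,\Phi(t)$ and $\int f\,\Phi^0$ into values of the single function $G(s):=\int f(Z(s;y,w))\,\Phi^0(y,w)\,dy\,dw$ at $s=t$ and $s=0$. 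The desired identity is then the fundamental theorem of calculus for $G$, whose derivative is $G'(s)=\int (Lf)(Z(s;y,w))\,\Phi^0(y,w)\,dy\,dw=\int \Phi(s)\,(Lf)\,dx\,dv$ after changing variables back. All integrals converge since $f$ is compactly supported with $f,Lf\in L^1$ and $\Phi$ is uniformly bounded.

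The genuine content is therefore the chain rule $\partial_s\big[f(Z(s;y,w))\big]=(Lf)(Z(s;y,w))$, which is immediate for smooth $f$ but must be justified for $f\in L^1$ with $Lf\in L^1$. Here I would mollify, setting $f_\delta=f*\rho_\delta$; the only nontrivial point is that $Lf_\delta\to Lf$ in $L^1_{loc}$, which is the DiPerna--Lions commutator lemma applied to the field $b=(v,F)$, applicable because $b\in W^{1,\infty}_{loc}$ with $\operatorname{div} b=0$. For each $f_\delta$ the chain rule holds classically, hence the identity holds with $f$ replaced by $f_\delta$; passing to the limit uses $f_\delta\to f$ in $L^1$, $Lf_\delta\to Lf$ in $L^1_{loc}$, the uniform bound on $\Phi$, and the common compact support, so that all three terms converge.

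I expect the passage to the limit at this low regularity to be the main obstacle: one cannot integrate by parts directly on $f$, and it is precisely the commutator estimate (equivalently, a renormalization argument for $b=(v,F)$) that supplies the chain rule and thereby closes the proof. Both the skew-adjointness route and the characteristics route reduce the lemma to this classical commutator lemma, which I regard as the technical heart of the statement.
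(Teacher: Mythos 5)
Your proof is correct, but it is organized differently from the paper's and is in fact more careful than what the paper writes down. The paper's proof is essentially your first-paragraph heuristic taken literally: it pairs $f$ against $(\partial_t+v.\nabla_x+F.\nabla_v)\Phi=0$ over $\Omega=]0,t[\times\mathbb{R}^d\times\mathbb{R}^d$, integrates the $\partial_t$ term exactly in time, and then applies Green's formula to move $v.\nabla_x+F.\nabla_v$ from $\Phi$ onto $f$, remarking only that the compact support of $f$ suppresses the boundary terms (the divergence-free structure of $b=(v,F)$ is used tacitly); no characteristics and no regularization appear. At the stated regularity that integration by parts is formal: $\Phi$ is merely a transported $L^\infty$ function (later in the paper, an indicator function), so $(v.\nabla_x+F.\nabla_v)\Phi$ is only a distribution and cannot be paired with $f\in L^1$, and the Green identity between the $L^1$ function $v.\nabla_x f+F.\nabla_v f$ and the bounded weak solution $\Phi$ is exactly the duality statement that needs an approximation argument. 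Your route buys that rigor: rewriting both ends of the identity as $G(t)$ and $G(0)$ with $G(s)=\int f(Z(s;y,w))\,\Phi^0(y,w)\,dy\,dw$ via the measure-preserving flow, so that $\Phi^0$ is never differentiated, and then mollifying $f$ and using the commutator lemma to obtain $Lf_\delta\to Lf$ in $L^1_{loc}$ before passing to the limit (legitimate thanks to the uniform bound on $\Phi$ and the common compact support); your identification of the commutator step as the technical heart is accurate. Two minor points: since $b=(v,F)\in W^{1,\infty}_{loc}$, the classical Friedrichs commutator lemma already suffices, and the full DiPerna--Lions $W^{1,1}_{loc}$ machinery, while applicable, is more than you need; and when you extend Proposition \ref{lemV} to $\Phi^0\in L^\infty$ you should add a word on uniqueness of the distributional solution with $L^\infty$ data (immediate here by duality, the field being Lipschitz), a point the paper itself leaves implicit when it later asserts that $\Phi$ takes values in $\{0,1\}$ by ``transport of the data.''
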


\begin{proof}
We have for all  $t>0$, $\int_\Omega
f(x,v)(\partial_t+v.\nabla_x+F.\nabla_v)\Phi(s,x,v)dsdxdv=0$, where $\Omega=]0,t[\times
\mathbb{R}^d\times \mathbb{R}^d$. We first have:
\begin{equation*}
\int_\Omega f(x,v)\partial_t \Phi(s,x,v)dsdxdv=\int\int f(x,v)
\Phi(t,x,v)dxdv - \int\int f(x,v) \Phi^0(x,v)dxdv.
\end{equation*}
Finally, by Green's formula we obtain:
\begin{equation*}
\begin{split}
\int_0^t \int f(x,v)(v.\nabla_x +& F.\nabla_v)\Phi(s,x,v)dsdxdv  \\= -&\int_0^t \int
\Phi(s,x,v)(v.\nabla_x+F.\nabla_v)f(x,v)dsdxdv.
\end{split}
\end{equation*}
There is no contribution from the boundaries since $f$ is compactly supported. \end{proof}

\begin{lemma}
\label{lem1}
Let $(f_\epsilon)$ a bounded family of $L^1_{loc}(dx\otimes dv)$
locally integrable in $v$ such that $(v.\nabla_x f_\epsilon +
F.\nabla_v f_\epsilon)$ is a bounded family of
$L^1_{loc}(dx\otimes dv)$. Then for all $\Psi \in \mathcal{C}^1_c(\mathbb{R}^d)$,  
such that $\Psi \geq 0$,
the family $\rho_\epsilon(x)=\int \vert f_\epsilon(x,v) \vert \Psi(v)dv 
$ is locally equiintegrable.
\end{lemma}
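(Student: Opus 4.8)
The plan is to use the time $t$ as an interpolation parameter, feeding the local mixing estimate of Proposition \ref{mixing} into the Green's formula of Lemma \ref{parts}, exactly as announced in the introduction. First I would reduce to a convenient situation. Since the drift $b=(v,F(x))$ is divergence free, the solution operator is composition with the (volume preserving) flow, so $|f_\epsilon|$ is renormalized along characteristics: $h_\epsilon:=|f_\epsilon|\ge 0$ solves $v.\nabla_x h_\epsilon+F.\nabla_v h_\epsilon=\operatorname{sgn}(f_\epsilon)\,(v.\nabla_x f_\epsilon+F.\nabla_v f_\epsilon)$, whose right-hand side has the same $L^1_{loc}$ bound as the original source, while $h_\epsilon$ inherits the local equiintegrability in $v$ of $f_\epsilon$ and $\rho_\epsilon=\int h_\epsilon\Psi\,dv$. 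Multiplying by a cut-off $\chi$ as in the proof of Theorem \ref{L1faible}, I may further assume all $h_\epsilon$ are supported in one fixed compact set; this is harmless for the short times $|t|\le\tau$ we use, since the characteristics then move by at most $O(\tau)$. Fixing a compact $K_x\subset\mathbb{R}^d_x$ and $\eta>0$, the goal becomes: find $\alpha>0$ so that $\int_A\rho_\epsilon\,dx\le\eta$ for every $A\subset K_x$ with $|A|<\alpha$, uniformly in $\epsilon$.

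Next I would apply Lemma \ref{parts} with the nonnegative datum $\Phi^0(x,v)=\mathbbm{1}_A(x)\Psi(v)$, so that $\int h_\epsilon\Phi^0\,dxdv=\int_A\rho_\epsilon\,dx$, obtaining
\[\int_A\rho_\epsilon\,dx=\int h_\epsilon(x,v)\,\Phi(t,x,v)\,dxdv-\int_0^t\!\!\int\Phi(s,x,v)\,\operatorname{sgn}(f_\epsilon)\,(v.\nabla_x f_\epsilon+F.\nabla_v f_\epsilon)\,dsdxdv,\]
where $\Phi(s,x,v)=\Phi^0(X(-s;x,v),V(-s;x,v))=\mathbbm{1}_A(X(-s;x,v))\,\Psi(V(-s;x,v))$ satisfies $0\le\Phi\le\Vert\Psi\Vert_\infty$. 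The source term is the easy half of the interpolation: its absolute value is bounded by $t\,\Vert\Psi\Vert_\infty\sup_\epsilon\Vert v.\nabla_x f_\epsilon+F.\nabla_v f_\epsilon\Vert_{L^1}=:Ct$, which is small for small $t$.

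For the main term I would exploit the mixing estimate. For fixed $x$ the $v$-section $A_x^t:=\{v:\,X(-t;x,v)\in A\}$ has, after the change of variables $w=X(-t;x,v)$ together with the uniform Jacobian bound (\ref{det}), measure $|A_x^t|=\int_A|\det\partial_v X(-t;x,v)|^{-1}\,dw\le 2\,t^{-d}|A|$, \emph{uniformly in} $x$. Moreover, on the support of $\Phi(t)$ one has $X(-t;x,v)\in A\subset K_x$ and $V(-t;x,v)\in\operatorname{Supp}\Psi$; combined with $|V(-t;x,v)-v|\le t\Vert F\Vert_{L^\infty}$ and $X(-t;x,v)=x-tv+O(t^2)$, this forces both $v$ and $x$ to lie in a fixed compact set $K\subset\mathbb{R}^d_x\times\mathbb{R}^d_v$ depending only on $K_x$, $\operatorname{Supp}\Psi$, $\Vert F\Vert_{L^\infty}$ and $\tau$. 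Hence
\[\int h_\epsilon\,\Phi(t)\,dxdv\le\Vert\Psi\Vert_\infty\int\Big(\int_{A_x^t}\mathbbm{1}_K(x,v)\,h_\epsilon(x,v)\,dv\Big)dx,\]
with $\sup_x|A_x^t|\le 2t^{-d}|A|$ the quantity to be made small.

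Finally I would balance the two parameters. Given $\eta$, first freeze $t\le\tau$ so that $Ct\le\eta/2$; with this $t$ fixed, invoke local equiintegrability in $v$ (second part of Definition \ref{equi}) on the compact $K$ with tolerance $\eta/(2\Vert\Psi\Vert_\infty)$, which yields some $\alpha'>0$ such that $\sup_x|A_x^t|<\alpha'$ makes the main term $\le\eta/2$. Choosing $\alpha=\alpha' t^d/2$ guarantees $\sup_x|A_x^t|\le 2t^{-d}|A|<\alpha'$, and adding the two bounds gives $\int_A\rho_\epsilon\le\eta$. I expect the genuinely delicate point to be the bookkeeping that makes equiintegrability in $v$ applicable uniformly in $\epsilon$: namely that the $v$-sections are uniformly small \emph{in $x$} (which rests squarely on the uniform lower bound for the Jacobian in (\ref{det})) and that the backward flow over $[0,t]$ confines $(x,v)$ to a single fixed compact set. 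The renormalization to $|f_\epsilon|$ and the $t$-balancing are then routine.
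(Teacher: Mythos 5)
Your proposal is correct and follows essentially the same route as the paper: transport the indicator of the small $x$-set backwards for a short time via Lemma \ref{parts}, use the Jacobian bound (\ref{det}) (i.e.\ Proposition \ref{mixing}) to get $v$-sections of measure at most $2t^{-d}|A|$ uniformly in $x$, control the source term linearly in $t$, and close with equiintegrability in $v$; your parameter ordering ($t$ first, then $\alpha=\alpha' t^d/2$) is the consistent reading of the paper's argument. The only (harmless) cosmetic difference is that you transport $\mathbbm{1}_A(x)\Psi(v)$ and apply Green's formula to $|f_\epsilon|$ directly, which makes the commutator term $\Vert F.\nabla_v\Psi\Vert_\infty\Vert f_\epsilon\Vert_1$ appearing in the paper disappear, whereas the paper transports $\mathbbm{1}_A(x)$ alone and applies the formula to $f_\epsilon\Psi$.
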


\begin{proof}
Let $K_1$ be a compact subset of $\mathbb{R}^d$. We want to prove that
$(\mathbbm{1}_{K_1}\rho_\epsilon(x))$ is equiintegrable. Without loss of generality, we can assume as previously that the $f_\epsilon$ 
are supported in the same compact support $K=K_1 \times K_2$  and that $\Psi$ is compactly
supported in $K_2$. Furthermore, the formula $\nabla \vert f_\epsilon  
\vert = \operatorname{sign}(f_\epsilon)\nabla f_\epsilon$  
 shows that the $\vert f_\epsilon \vert$  
satisfy the same assumptions of equiintegrability and 
$L^1$ boundedness as the family $(f_\epsilon)$. 
For the sake of readability, we will thus assume that $f_\epsilon$ are almost everywhere 
non-negative instead  of considering $\vert f_\epsilon \vert 
$. Finally we may assume that $\Vert \Psi \Vert_\infty=1$ (multiplying by a constant does not change the equiintegrability property).

The idea of the proof is to show that thanks to the mixing properties established previously,
the equiintegrability in $v$ provides some  equiintegrability in $x$.
\par

Let $\eta>0$. By definition of the local equiintegrability in $v$, we obtain a parameter
 $\alpha>0$  associated to $K$ and $\eta$. We also consider parameters $\alpha'>0$ and $t\in]0,\tau[$ (where $\tau$ is the maximal time in Proposition \ref{mixing}) to be fixed ultimately. We mention that $t$ will be chosen only after $\alpha'$ is fixed.

Let $A$ a bounded mesurable subset included in $K_1$ with $|A|\leq
\alpha'$. We consider $\Phi^0(x,v)=\mathbbm{1}_A(x)$ and $\Phi$ the solution of the transport
equation (\ref{trans}) with $ 
\Phi^0$ as initial data.

Observe now that we have $\Vert \Phi^0
\Vert_{L^1_x(L^\infty_v)}=|A|$. Moreover, since $\Phi^0$ takes its values in $\{0,1\}$, it is also the case for $\Phi$ (this is a plain
consequence of the transport of the data).
\par
We define for all $s>0$ and for all $x\in \mathbb{R}^d$, the set $A(s)_x=\{v\in\mathbb{R}^d,
\Phi(s,x,v)=1\}$. At this point of the proof, we make a crucial use of the mixing property stated in Proposition \ref{mixing} :
\begin{eqnarray*}
\sup_x |A(t)_x|&=&\sup_x \int \Phi(t,x,v)dv \\
&=& \Vert \Phi(t,.,.) \Vert_{L^\infty_x(L^1_v)} \\
&\leq& 2|t|^{-d} \underbrace{\Vert \Phi^0
\Vert_{L^1_x(L^\infty_v)}}_{|A|\leq \alpha'} \\
&\leq& \alpha,
\end{eqnarray*}
if we choose $\alpha'$ satisfying $\alpha'< \frac{1}{2} t^D \alpha$.

Thanks to Lemma \ref{parts}:
\begin{equation*}
\begin{split}
\int f(x,v) \Psi(v)\Phi^0(x,v)dxdv=&\int
f(x,v)\Psi(v)\Phi(t,x,v)dxdv\\-&\int_0^t\int
\Phi(s,x,v)(v.\nabla_x  + F. \nabla_v
)(f_\epsilon(x,v)\Psi(v))dxdvds.
\end{split}
\end{equation*}

In other words, the operator $\partial_t + v.\nabla_x + F.\nabla_v$ has  transported   
the indicator function and has transformed a subset small in $x$ into a subset small 
in $v$.

By definition of $\rho_\epsilon$, we have:
$$
\int f(x,v) \Psi(v)\Phi^0(x,v)dxdv = \int \mathbbm{1}_A(x) \rho_\epsilon(x)dx.
$$
By definition of $A(t)_x$ we also have:
$$
\int f(x,v)\Psi(v)\Phi(t,x,v)dxdv=\int\left(\int_{A(t)_x}f_\epsilon(x,v)\Psi(v)dv \right)dx.
$$

Thus,  since $(f_\epsilon)$ are locally equiintegrable in $v$ we may evaluate:
\begin{equation*}
\begin{split}
\int\left(\int_{A(t)_x}f_\epsilon(x,v)\Psi(v)dv \right)dx \leq&  \int \int_{A(t)_x} |f_\epsilon| \mathbbm{1}_{K}
\underbrace{\Vert \Psi\Vert_{\infty}}_{=1}dxdv \\
\leq &\eta .
\end{split}
\end{equation*}

Finally we have:
\begin{equation*}
\begin{split}
\int \mathbbm{1}_A(x) \rho_\epsilon(x)dx=&
\int\left(\int_{A(t)_x}f_\epsilon(x,v)\Psi(v)dv \right)dx \\ -& 
\int_0^t\int
\Phi(s,x,v)(v.\nabla_x + F. \nabla_v
)(f_\epsilon(x,v)\Psi(v) )dsdxdv
\end{split}
\end{equation*}
\begin{equation*}
\begin{split}
\leq&\eta + \int_0^t\int
|\Phi(s,x,v)||(v.\nabla_x + F. \nabla_v
)(f_\epsilon(x,v)\Psi(v))|dsdxdv \\
\leq& \eta + t\left[\underbrace{\Vert \Psi \Phi\Vert_\infty}_{\leq1}
\Vert v.\nabla_x f_\epsilon + F.\nabla_v f_\epsilon \Vert_1
+ \underbrace{\Vert \Phi\Vert_\infty}_{=1} \Vert F.\nabla_v
\Psi(v)\Vert_\infty \Vert f_\epsilon \Vert_1\right]\\
\leq& 2\eta,
\end{split}
\end{equation*}

by taking $t$ sufficiently small:
$$t<\frac{\eta}{\sup_\epsilon \Vert
v.\nabla_x f_\epsilon + F.\nabla_v f_\epsilon \Vert_1+\Vert
F.\nabla_v \Psi(v)\Vert_\infty \Vert f_\epsilon \Vert_1}.$$

This finally proves that  $(\rho_\epsilon)$ is locally 
equiintegrable in $x$.
\end{proof}

\begin{lemma}
\label{lem2}
Let $(g_\epsilon)$a bounded family of $L^1_{loc}(dx\otimes dv)$
locally equiintegrable in $v$. If for all $\Psi \in
\mathcal{C}^1_c(\mathbb{R}^d)$ such that $\Psi \geq 0$, $x \mapsto \int |g_ 
\epsilon(x,v)|\Psi(v)dv$ is
locally equiintegrable (in $x$), then $(g_\epsilon)$ is
locally equiintegrable in $x$ and $v$.
\end{lemma}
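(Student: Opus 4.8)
Lemma \ref{lem2} asks us to upgrade local equiintegrability in $v$ to local equiintegrability jointly in $(x,v)$, given the extra hypothesis that for every nonnegative $\Psi \in \mathcal{C}^1_c$ the spatial moment $x \mapsto \int |g_\epsilon(x,v)|\Psi(v)\,dv$ is locally equiintegrable in $x$.

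\begin{proof}[Proof proposal]
The plan is to control an integral of $|g_\epsilon|$ over an arbitrary small set $A \subset \mathbb{R}^d_x \times \mathbb{R}^d_v$ by splitting $A$ into its $x$-fibers and handling the horizontal and vertical smallness separately. Fix a compact $K = K_1 \times K_2$ and a tolerance $\eta > 0$. I would first choose $\Psi \in \mathcal{C}^1_c(\mathbb{R}^d)$ nonnegative with $\Psi \geq 1$ on $K_2$, so that for $(x,v)$ with $v \in K_2$ one has $\mathbbm{1}_K(x,v)|g_\epsilon(x,v)| \leq \mathbbm{1}_{K_1}(x)|g_\epsilon(x,v)|\Psi(v)$.

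The key step is the fiber decomposition. For a measurable $A \subset \mathbb{R}^d \times \mathbb{R}^d$ with $|A| < \alpha$, write $A_x = \{v : (x,v) \in A\}$ for its vertical slice. By Fubini, $\int_{K_1} |A_x|\,dx \leq |A| < \alpha$, so the ``bad'' set $B = \{x \in K_1 : |A_x| > \sqrt{\alpha}\}$ has measure $|B| \leq \sqrt{\alpha}$ (by Chebyshev). On the complementary ``good'' fibers, $|A_x| \leq \sqrt{\alpha}$ is uniformly small, so I can invoke \emph{local equiintegrability in $v$} (Definition \ref{equi}, part 2): applied to the single family of slices $(A_x)$ with $\sup_x |A_x| \leq \sqrt{\alpha}$, this bounds $\int \left(\int_{A_x} \mathbbm{1}_K |g_\epsilon|\,dv\right)dx$ by $\eta/2$ once $\sqrt{\alpha}$ is below the threshold supplied by that definition. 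For the bad fibers, I would use the moment hypothesis: since $B \subset K_1$ with $|B|$ small, and since $x \mapsto \int |g_\epsilon|\Psi\,dv$ is locally equiintegrable in $x$, the integral $\int_B \int_{K_2}|g_\epsilon|\Psi\,dv\,dx$ is also $\leq \eta/2$ for $|B|$ small, hence controls the contribution of $K \cap (B \times \mathbb{R}^d)$.

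The only real subtlety is the quantitative matching of thresholds: I must pick $\alpha$ small enough that $\sqrt{\alpha}$ lies below \emph{both} the velocity-equiintegrability threshold (for the good fibers) and the threshold from the $x$-equiintegrability of the moment (for the bad fibers, via $|B| \leq \sqrt{\alpha}$). Since both thresholds depend only on $K$, $\eta$, and the fixed $\Psi$, and not on $\epsilon$, taking $\alpha$ to be the minimum of the two induced values (and small enough that $\sqrt{\alpha}$ is admissible in each) closes the argument uniformly in $\epsilon$. Combining the two bounds yields $\int_A \mathbbm{1}_K |g_\epsilon|\,dx\,dv \leq \eta$ for all $\epsilon$, which is precisely local equiintegrability in $(x,v)$ from Definition \ref{equi}, part 1. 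The main obstacle is purely bookkeeping: ensuring the Chebyshev split produces fiber sizes small enough to feed both hypotheses simultaneously; there is no analytic difficulty beyond Fubini and the two equiintegrability definitions.
\end{proof}
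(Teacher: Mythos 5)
Your proposal is correct and follows essentially the same route as the paper: the same Chebyshev/Fubini split of the slices $A_x$ into good fibers (uniformly small $|A_x|$, handled by equiintegrability in $v$) and a bad $x$-set of small measure (handled by the equiintegrability of the moment $\int|g_\epsilon|\Psi\,dv$, with $\Psi$ dominating on the velocity projection of $K$), with the same $\sqrt{\alpha}$-type threshold matching that the paper implements via $|B|<\inf(\alpha_1^2,\alpha_2^2)$. The only point to make explicit is that when invoking Definition \ref{equi} part 2 you should replace $A_x$ by $\emptyset$ on the bad set so that $\sup_x|A_x|$ is genuinely below the threshold, a step the paper handles implicitly as well.
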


\begin{proof}
Let $K$ be a compact subset of $\mathbb{R}^d \times \mathbb{R}^d$. We
want to prove that $(\mathbbm{1}_K g_\epsilon)$ is
equiintegrable in $x$ and $v$. As before, we can clearly assume that the  $g_\epsilon$
are compactly supported in $K$.
\par
Let $\eta>0$. By definition of the local equiintegrability in $v$
for $(g_\epsilon)$, we obtain $\alpha_1>0$ associated to $\eta$ and
$K$.

\par
Let $\Psi \in \mathcal{C}^1_c(\mathbb{R}^d)$ a smooth non-negative and compactly supported function such that  $\Psi\equiv 1$ on $p_v(K)$ (where $p_v(K)$ is the projection of $K$ on $\mathbb{R}^d_v$). 
By assumption, there exists $\alpha_2
 >0$ such that for any $A \subset \mathbb{R}^d$ measurable set satisfying
$|A|\leq \alpha_2$,
\begin{equation*}
\int_A\left(\int |g_\epsilon|\Psi dv\right)dx<\eta.
\end{equation*}
Let $B$ a measurable subset of $\mathbb{R}^d \times \mathbb{R}^d$
such that $|B|<\inf (\alpha_1^2,\alpha_2^2)$. We define for all $x \in
\mathbb{R}^d$, $B_x=\{v \in \mathbb{R}^d, (x,v) \in B \}$.
\par
We consider now $E=\{x \in \mathbb{R}^d, |B_x|\leq |B|^{1/2}\}$ : this is the
subset of $x$ for which there exist  few $v$ such that  
$(x,v)\in B$. Consequently for this subset, we can use the
local equiintegrability in $v$.

Concerning $B \backslash E$, on the contrary, we can not use this property,  but 
thanks to Chebychev's inequality we show that this subset is 
of small measure, which allows us to use this time the
 local equiintegrability in $x$ of $\int |g_ \epsilon(x,v)|\Psi(v)dv$ :
\begin{eqnarray*}
|E^c|&=&\vert \{x \in \mathbb{R}^d, |B_x|> |B|^{1/2}\} \vert \\
&\leq& \frac{|B|}{|B|^{1/2}} \\
&\leq& \alpha_2.
\end{eqnarray*}

Hence we have :
\begin{eqnarray*}
\int \mathbbm{1}_B |g_\epsilon|dxdv &\leq& \int_E \left(\int_{B_x}
|g_\epsilon|dv\right)dx+\int_{E^c} \left(\int |g_\epsilon|dv \right)dx  
\\
&\leq& \eta  + \int_{E^c} \left(\int
|g_\epsilon|\Psi(v)dv \right)dx \\
&\leq& 2\eta.
\end{eqnarray*}

This shows the expected result.
\end{proof}

We are now able to conclude the proof of Theorem \ref{L1}.

\begin{proof}[End of the proof of Theorem \ref{L1}]
 If we successively apply Lemmas \ref{lem1} and \ref{lem2}, we deduce that  the family $(f_\epsilon)$ is locally
   equiintegrable in $x$ and $v$.
   
   Finally we have shown in Section \ref{first} that the first point implies the second.

\end{proof}

\begin{acknowledgements}
The author would like to thank his advisor, Laure Saint-Raymond, for suggesting the subject and for her help. He is also indebted to the referee for several valuable remarks and suggestions which helped to improve the presentation of the paper.
\end{acknowledgements}

\end{document}